\newcommand{\seqo}{{}^{<\omega}\omega}
\newcommand{\fhi}{\varphi}
\newcommand{\imp}{\Rightarrow}
\renewcommand{\L}{\mathcal{L}}
\newcommand{\C}{\mathcal{C}}
\newcommand{\E}{\mathcal{E}}
\newcommand{\RR}{\mathbb{R}}
\newcommand{\EE}{\mathbb{E}}
\newcommand{\Linf}{\L_{\omega_1 \omega}}
\newcommand{\id}{\operatorname{id}}
\newcommand{\dom}{\operatorname{dom}}
\newcommand{\Mod}{\operatorname{Mod}}
\newcommand{\rp}{\operatorname{rp}}
\newtheorem{theorem}{Theorem}[section]
\newtheorem{lemma}[theorem]{Lemma}
\newtheorem{corollary}[theorem]{Corollary}
\newtheorem{proposition}[theorem]{Proposition}
\newtheorem*{question}{Question}
\newtheorem*{problem}{Problem}
\theoremstyle{definition}
\newtheorem{defin}[theorem]{Definition}
\newtheorem{claim}{Claim}[theorem]
\theoremstyle{remark}
\newtheorem{remark}[theorem]{Remark}
\numberwithin{equation}{section}
\begin{document}

\title[On the complexity of isomorphism and bi-embeddability]{On the complexity of the relations of isomorphism and bi-embeddability}

\author{Luca Motto Ros}
\address{Albert-Ludwigs-Universit\"at Freiburg \\
Mathematisches Institut -- Abteilung f\"ur Mathematische Logik\\
Eckerstra{\ss}e, 1 \\
 D-79104 Freiburg im Breisgau\\
Germany}
\email{luca.motto.ros@math.uni-freiburg.de}
\thanks{The author would like to thank the FWF (Austrian Research  
  Fund) for generously supporting this research through  Project  
  number P 19898-N18.}
\keywords{Analytic equivalence relation, isomorphism, (bi-)embeddability, Borel reducibility.}

\subjclass[2010]{Primary 03E15}

\date{\today}

\commby{Julia Knight}

\begin{abstract}
 Given an $\Linf$-elementary class $\C$, that is the
 collection of the countable models of some $\L_{\omega_1
   \omega}$-sentence, denote by $\cong_\C$ and $\equiv_\C$ the
 analytic equivalence relations of, respectively, isomorphism and
 bi-embeddability on $\C$. Generalizing some questions of Louveau and
 Rosendal \cite{louros}, in \cite{friedmanmottoros} it was proposed the
 problem of determining which pairs of analytic equivalence relations
 $(E,F)$  can be realized (up to Borel bireducibility) as pairs of the
 form  $( \cong_\C,
 \equiv_\C)$, $\C$ some $\Linf$-elementary class (together with
 a partial answer for some specific cases). Here we will 
 provide an almost complete solution to such problem: under very
 mild conditions on
 $E$ and $F$, it is always possible to find such an $\Linf$-elementary
 class $\C$.
\end{abstract}

\maketitle

\section{Introduction}

An equivalence relation $E$ defined on a Polish space (or, more
generally, on  
a standard Borel space) $X$ is said to be  \emph{analytic} if it is an
analytic  
subset of $X \times X$. Analytic equivalence relations arise very often in
various areas of mathematics, and are usually connected with important
classification problems --- see e.g.\ the preface of
\cite{hjorth} for a brief but 
informative introduction to this subject. The most popular way to
measure the relative 
complexity of two analytic equivalence relations $E$ and $F$ is given by the
notions of \emph{Borel reducibility} and \emph{Borel bireducibility} (in
symbols $\leq_B$ and $\sim_B$, respectively): $E \leq_B F$ if  there is a
Borel function $f$ between the corresponding domains which reduces $E$
to $F$, that is 
such that $x\, E\, y \iff f(x)\, F\, f(y)$ for every $x,y$ in the domain of
$E$, and $E \sim_B F$ if  $E \leq_B F$ and $F \leq_B E$. (We will 
denote by $<_B$ the  strict part
of $\leq_B$.) Intuitively, $E \leq_B F$  means that $E$ is not more
complicated than 
$F$, so that  $E \sim_B F$ means that $E$ and $F$ have same complexity.

Similar definitions and terminology will be also applied to
\emph{analytic quasi-orders} (i.e.\ reflexive and transitive relations
$R$ on a
standard Borel space $X$ which are analytic subsets of $X \times X$), and when dealing with an analytic quasi-order $R$ we will also often consider the analytic equivalence
relation $E_R = R \cap R^{-1}$ canonically induced by $R$.

A nice example of analytic equivalence relation is  the following:
consider an $\Linf$-elementary class $\C$, that is the collection of
all countable models of some sentence of the
infinitary logic $\L_{\omega_1 \omega}$, $\L$ some countable
language. Assuming that all these models have domain $\omega$ (the set
of natural numbers), we can canonically identify each of them with an
element of the Polish space of $\L$-structures $\Mod(\L)$ (which is homeomorphic to the Cantor space), and by a well-known theorem
of Lopez-Escobar (see e.g.\ \cite[Theorem 16.8]{kechris}), $\C \subseteq
\Mod(\L)$ is an $\Linf$-elementary class if and only if $\C$ is
Borel and invariant under isomorphism: this easily implies that the
relation of 
isomorphism $\cong_\C$ between elements of $\C$  becomes an analytic
equivalence 
relation (relations of this form 
will be simply called \emph{isomorphism relations}). 

If in the previous definition we replace
isomorphisms with (logical) embeddings between elements of $\C$, we
get the analytic quasi-order $\sqsubseteq_\C$ of embeddability on
$\C$, which in turn 
canonically induces the analytic equivalence relation $\equiv_\C$ of
bi-embeddability between elements of $\C$. The possible relationships
between 
$\cong_\C$ and $\equiv_\C$ were first investigated in
\cite{friedmanmottoros}, where the authors constructed
various $\Linf$-elementary classes $\C$ satisfying certain conditions on
$\cong_\C$ and $\equiv_\C$ to answer  some questions posed by
Louveau and Rosendal in their \cite{louros}: in particular, in \cite{friedmanmottoros} it is shown that given an arbitrary analytic equivalence relation $F$ there is an $\Linf$-elementary class $\C$ such that ${\cong_\C} \sim_B \id(\RR)$, where $\id(\RR)$ denotes the identity relation on $\RR$, and ${\equiv_\C} \sim_B F$. After those examples, the 
following problem was formulated: 

\begin{problem}
Consider an arbitrary pair of analytic
equivalence relations $(E,F)$: is it possible to find an $\L_{\omega_1
  \omega}$-elementary class $\C$ such that $E \sim_B {\cong_\C}$ and
$F \sim_B {\equiv_\C}$?

 Similarly, one can consider the analogous
question regarding a pair $(E,R)$ consisting of an analytic
equivalence relation and an analytic quasi-order: 
is there an
$\Linf$-elementary class $\C$ such that $E \sim_B {\cong_\C}$ and $R
\sim_B {\sqsubseteq_\C}$?
\end{problem}

For ease of exposition, if such a $\C$ exists we will say that $\C$ \emph{represents} the pairs $(E,F)$ or $(E,R)$, respectively.
The problem of giving a complete and general characterization of those $(E,F)$ and $(E,R)$ which can be represented by an $\Linf$-elementary class was considered in \cite{friedmanmottoros} a potentially difficult problem. First we must notice that there are some obvious limitations to the possibility of having such a representation: for example,
since there are
  many analytic equivalence relations which are not even Borel
  reducible to an isomorphism relation, we should at least 
ask that $E$ is  a \emph{quasi-isomorphism relation}, i.e.\
that $E$ is Borel bireducible with some isomorphism
relation on some $\Linf$-elementary class (by \cite[Theorem 5.5.1]{hodges}, such a class can be assumed to always consist of connected graphs) or, equivalently, to an equivalence
relation induced by the Borel 
action of a closed subgroup of the symmetric group $S_\infty$ --- see \cite[Theorems 2.3.5 and 2.7.3]{beckerkechris}. In
contrast, no \emph{a priori} condition must be put on $F$ or $R$ since in
\cite{friedmanmottoros} it is shown that any analytic equivalence
relation (resp.\ any analytic quasi-order) is actually Borel bireducible with the
bi-embeddability (resp.\ embeddability) relation on a corresponding
$\Linf$-elementary class --- see Theorem \ref{theorfrimot}. 

A less trivial, but still easy, restriction that must be put on the
pairs $(E,F)$ and $(E,R)$ is given by the
following ``cardinality'' consideration. Denote by $\id(n)$,
  where $1 \leq n \leq \omega$, an arbitrary analytic equivalence
  relation with exactly $n$ classes. Given
  an $\L_{\omega_1 
  \omega}$-elementary class $\C$, since $\equiv_\C$ is by definition
coarser than 
$\cong_\C$  then the ``cardinality'' of $\C /_{\equiv_\C}$ cannot
exceed the ``cardinality'' of $\C /_{\cong_\C}$, that is:
\begin{itemize}
\item if ${\cong_\C} \leq_B \id(\omega)$ then ${\equiv_\C} \leq_B
  {\cong_\C}$;
\item if $F \leq_B {\equiv_\C}$ then $F \leq_B {\cong_\C}$, where $F$
  is one of $\id(1), \dotsc, \id(\omega), \id(\RR)$.
\end{itemize}
 As we will see, if 
Vaught's Conjecture is true (equivalently, by the Silver's
dichotomy, if every $E$ which is
a quasi-isomorphism relation is $\leq_B$-comparable with
$\id(\RR)$) then 
these are quite 
surprisingly  the unique obstructions to
get a representation of the pairs $(E,F)$ and $(E,R)$.
 In fact, Theorems \ref{theorRcompatible} and
\ref{theor'} (which constitute the
main results of the paper) show that given a quasi-isomorphism relation $E$ and an analytic quasi-order $R$ such that either $\id(\RR) \leq_B E$ or $E_R \leq_B E, \id(\RR)$, there exists an $\Linf$-elementary class $\C$ with the property that ${\cong_\C} \sim_B E$ and ${\sqsubseteq_\C} \sim_B R$. In particular, if $E$ is a quasi-isomorphism relation and either $E
 \leq_B {\id(\omega)}$ or ${\id(\RR)} \leq_B E$, then there is an $\Linf$-elementary class $\C$ representing the pair 
$(E,F)$ (resp.\ $(E,R)$) \emph{if and only if} 
either  $F \leq_B E$ (resp.\ $E_R \leq_B E$) or ${\id(\RR)} \leq_B
E$. 
The same kind of
result can be obtained by considering homomorphisms,
weak-homomorphisms or weak-epimorphisms instead of embeddings --- see
Sections \ref{sectionEsimple} and \ref{sectionepi}.

 The results above can also be
na\"ively interpreted as a proof  that the complexities of the
relations of isomorphism and 
bi-embeddability on some $\Linf$-elementary class are 
(almost) independent from each other: given any isomorphism relation $\cong_\E$  and an arbitrary
quasi-order $R$ on 
$\E$ such that ${\cong_\E} \subseteq E_R$ (so that $R$ can
\emph{potentially} 
be the embeddability relation on $\E$), then the above mentioned
results show that unless both $\cong_\E$ and $E_R$ are
$\leq_B$-incomparable 
with $\id(\RR)$ there
is an $\Linf$-elementary class $\C$ such that\footnote{Our result is even stronger: in fact we get that there is a pair of functions \emph{simultaneously} witnessing both ${\cong_\C} \simeq_{cB} {\cong_\E}$ (see Section \ref{sectioncomparison} for the definition of $\simeq_{cB}$) and ${\sqsubseteq_\C} \sim_B R$.} ${\cong_\C} \sim_B {\cong_\E}$ and ${\sqsubseteq_\C} \sim_B R$. This also means that almost all the possible mutual
relationships between 
the isomorphism and the (bi-)embeddability relations can actually be
realized with a suitable $\Linf$-elementary class.

On the way of proving our main result, we will deal with the notion of Borel
isomorphism, which  plays  a key role in the
proofs of the results of
this paper. This notion (which is strictly finer
  than Borel bireducibility) slightly strengthens some variants of Borel reducibility already introduced in \cite{friedmanstanley}, and we feel that the applications we are going to present can be viewed as evidence that such a notion is natural, interesting and useful in the study of analytic equivalence relations and quasi-orders.

The paper is organized as follows.  In Section \ref{sectioncomparison} we will prove some basic results about
classwise Borel isomorphism and classwise Borel embeddability which will be useful in the subsequent sections (but which may also be of independent interest). In 
Section \ref{sectionEsimple} we will prove the main results of this
paper (Theorems
\ref{theorRcompatible} and  \ref{theor'}), and finally in Section
\ref{sectionepi} we will 
show how to extend the results of the previous section to the case
of weak-epimorphisms.

We  assume that the reader is quite
familiar with the standard terminology and basic results about analytic
equivalence relations and Borel reducibility: references for these
topics are for example 
\cite{kechris}, \cite{beckerkechris}, \cite{hjorth} and \cite{gao}. Part of the main
techniques that will be used in the paper were first introduced in
\cite{friedmanmottoros} and, 
partially, in \cite{friedmanstanley}:  for the
reader's convenience, throughout the paper we will
recall the main results and constructions  coming from those papers, but we refer to the original works for proofs and
detailed explanations.

\section{Classwise Borel isomorphism and classwise Borel embeddability}\label{sectioncomparison}

The present section contains
a basic analysis of  Borel
isomorphism and classwise Borel embeddability (see Definitions
\ref{defBorelisomorphism} and
\ref{defBorelembedding}), and is mainly motivated by the fact that 
some of the properties presented here will be used in
Theorem \ref{theorRcompatible}. Nevertheless, the results of this section are
also interesting \emph{per se} as they constitute a study of some
basic properties of these natural and useful notions, and a modest
contribution to the study of orbit equivalence relations (i.e.\ of
those analytic equivalence relations which are induced by a Borel
action of a Polish group on some standard Borel space).

\begin{defin}[\cite{friedmanmottoros}]
  \label{defBorelisomorphism} 
Let $E,F$ be two analytic equivalence relations on standard Borel
spaces $X,Y$, respectively. We say that $E$ is
\emph{classwise Borel isomorphic}\footnote{This notion was introduced in \cite{friedmanmottoros} as ``Borel isomorphism'': however, in the present paper we decided to adopt a variation of that name to avoid confusion with a different notion of Borel isomorphism which is now quite standard in the literature, see e.g.\ \cite{gao}.} 
  to $F$ ($E \simeq_{cB} F$ in symbols)  if there are Borel reductions
  $\fhi \colon X \to Y$ and $\psi \colon Y \to X$ of 
  $E$ into $F$ and $F$ into $E$, respectively,  such that their
  factorings to the 
  quotient spaces $\hat{\fhi} \colon
  X/_E \to Y/_F$ and $\hat{\psi} \colon Y/_F \to X/_E$ are
  bijections and satisfy $\hat{\fhi} = \hat{\psi}^{-1}$.

In other words: $E \simeq_{cB} F$ if and only if there is a bijection $f \colon X/_E \to Y/_F$ such that both $f$ and $f^{-1}$ admit Borel liftings.
\end{defin}

 Classwise Borel isomorphism \emph{strictly refines}
  Borel bireducibility: a first example of this phenomenon was given in
\cite{friedmanmottoros} by considering the $\sim_B$-equivalence
class of $\id(\RR)$, but this result will be extended in Theorem
\ref{theornonborelisomorphic} to the
$\sim_B$-equivalence class of \emph{any} orbit equivalence relation which
Borel reduces $\id(\RR)$. However, as we will see in Theorems
\ref{theorBorelorbitequivrel} and \ref{theorRcompatibility}, the two notions coincide if we restrict
our attention to
some special case, like e.g.\ to the class of Borel orbit equivalence
relation.

\begin{defin}\label{defBorelembedding}
Given two analytic equivalence
relations $E,F$ on standard Borel spaces $X,Y$, respectively, we say that $E$ \emph{classwise Borel embeds} into $F$
($E \sqsubseteq_{cB} F$ 
in symbols) just in case there is a Borel $F$-saturated subset $Y_E
\subseteq Y$ such that $E \simeq_{cB} {F \restriction
Y_E}$.
\end{defin}

The notion of classwise Borel embeddability is not far from what
  in \cite{gao} is called \emph{faithful Borel reducibility} (a notion first introduced in \cite{friedmanstanley}). However,
  classwise Borel embeddability is a strictly stronger notion because we require the
  existence of a sort of ``inverse'' 
(modulo equivalence classes) of the
 reduction from $E$ to $F$.

Another interesting property of classwise Borel embeddability is that many
popular classes of analytic equivalence relations are closed under this
notion of reducibility: if e.g.\ we consider the class of isomorphism
relations, or even the broader class of orbit equivalence relations, we
have that 
any analytic equivalence relation which classwise Borel embeds into an element
of this class is actually classwise Borel isomorphic to an element of the same
class. This should be contrasted with the fact that it is still an
open (and seemingly hard) problem to determine if every analytic
equivalence relation which 
is Borel
reducible to an element of one of the above mentioned  classes is
Borel bireducible with a member of the same class: to the best of our knowledge,  \cite[Theorem 11.3.9]{gao} and Remark \ref{remclosure} are the unique results in this
direction. 

 Finally, a classwise Borel embedding between two isomorphism relations $\cong_\C$ and $\cong_\E$ can be seen as a sort of \emph{$\Linf$-interpretation} between the two
 elementary classes in the sense explained in \cite[p.\ 897]{friedmanstanley}, that is in the sense that it provides a correspondence between $\Linf$-sentences. More precisely, for every $\Linf$-sentence $\Phi$ there is an $\Linf$-sentence $\Psi$ such that the set of models in $\C$ satisfying $\Phi$ is mapped by the witnesses of ${\cong_\C} \sqsubseteq_{cB} {\cong_\E}$ into the set of models in $\E$ which satisfy $\Psi$ in a bijective (up to isomorphism) and Borel way.

The next proposition shows that classwise Borel embeddability is the counterpart in
terms of reducibility of classwise Borel isomorphism.

\begin{proposition}\label{propSB}
Let $E,F$ be analytic equivalence relations. If $E \sqsubseteq_{cB} F$ and
$F \sqsubseteq_{cB} E$ then $E \simeq_{cB} F$. Therefore
$E \simeq_{cB} F$ if and only if $E \sqsubseteq_{cB} F
\sqsubseteq_{cB} E$.
\end{proposition}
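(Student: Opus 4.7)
The plan is a Schröder--Bernstein-style argument at the level of the quotient spaces, using the Borel reductions supplied by the hypothesis to lift the resulting bijection in a Borel way. From $E \sqsubseteq_{cB} F$ I extract a Borel $F$-saturated set $Y_E \subseteq Y$ and Borel reductions $\alpha \colon X \to Y$ (with range in $Y_E$) and $\beta \colon Y_E \to X$ whose factorings $\hat{\alpha}, \hat{\beta}$ are mutually inverse bijections between $X /_E$ and $Y_E /_F$; symmetrically, $F \sqsubseteq_{cB} E$ yields $X_F \subseteq X$ together with Borel reductions $\gamma \colon Y \to X$ (with range in $X_F$) and $\delta \colon X_F \to Y$.

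Viewing $\hat{\alpha}$ and $\hat{\gamma}$ as injections between $X /_E$ and $Y /_F$, the standard Schröder--Bernstein recipe sets $A_0 = (X /_E) \setminus \hat{\gamma}(Y /_F)$, $A_{n+1} = \hat{\gamma}(\hat{\alpha}(A_n))$, and $A = \bigcup_n A_n$, and then $\hat{\alpha}$ on $A$ together with $\hat{\gamma}^{-1} = \hat{\delta}$ on the complement yields a bijection $h \colon X /_E \to Y /_F$. The principal obstacle is to keep $A$ lifting to a \emph{Borel} subset of $X$: a priori, $\hat{\alpha}$-images of Borel saturated sets only lift to analytic (namely $F$-saturations of Borel) sets. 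The key observation resolving this is that on its image $\hat{\alpha}$ is inverted by $\hat{\beta}$, and similarly $\hat{\gamma}$ by $\hat{\delta}$, so these forward images can be computed as Borel preimages: concretely, the lift of $\hat{\alpha}(A_n /_E)$ coincides with $Y_E \cap \beta^{-1}(A_n^X)$, where $A_n^X \subseteq X$ denotes the Borel lift of $A_n$, and analogously for $\hat{\gamma}$. Starting from $A_0^X = X \setminus X_F$, an easy induction then produces Borel saturated sets $B_n^Y = Y_E \cap \beta^{-1}(A_n^X)$ and $A_{n+1}^X = X_F \cap \delta^{-1}(B_n^Y)$, so that $A^X = \bigcup_n A_n^X$ and $B^Y = \bigcup_n B_n^Y$ are Borel.

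With these sets in hand, I would define $\tilde{h} \colon X \to Y$ by setting $\tilde{h}(x) = \alpha(x)$ if $x \in A^X$ and $\tilde{h}(x) = \delta(x)$ otherwise (noting that $x \notin A^X$ forces $x \in X_F$, so $\delta$ is defined there), and symmetrically $\tilde{k} \colon Y \to X$ using $\beta$ on $B^Y \subseteq Y_E$ and $\gamma$ on its complement. What remains is routine verification: $\tilde{h}$ and $\tilde{k}$ are Borel reductions, the only delicate point being the mixed case ($x \in A^X$, $x' \notin A^X$), where one uses that $\alpha(x) \in B^Y$ whereas $\delta(x') \notin B^Y$, so $\alpha(x)$ and $\delta(x')$ cannot be $F$-equivalent; and their factorings $\hat{h}, \hat{k}$ are mutually inverse by $\hat{\beta}\hat{\alpha} = \id$ and $\hat{\gamma}\hat{\delta} = \id$. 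This proves $E \simeq_{cB} F$. The ``if and only if'' is then immediate, since $E \simeq_{cB} F$ trivially yields classwise Borel embeddings in both directions by taking $Y_E = Y$ and $X_F = X$.
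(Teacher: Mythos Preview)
Your proposal is correct and follows essentially the same route as the paper. The paper's proof is a one-line sketch: apply the usual Schr\"oder--Bernstein argument, noting that if $\varphi,\psi$ witness $E \sqsubseteq_{cB} F$ then for every $E$-invariant Borel $A$ the $F$-saturation of $\varphi(A)$ equals $\psi^{-1}(A)$, hence is Borel --- which is precisely the key observation you isolate and then carry out in detail.
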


\begin{proof}
It is enough to notice that we can apply the usual Schr\"oder-Bernstein argument because if $\fhi \colon \dom(E) \to \dom(F)$ and $\psi \colon \dom(F) \to \dom(E)$ witness $E \sqsubseteq_{cB} F$, then for every $E$-invariant Borel $A \subseteq \dom(E)$ the $F$-saturation of $\fhi(A)$ is $\psi^{-1}(A)$, hence a Borel set (and similarly exchanging $E$ and $F$). 
\end{proof}

It is not hard to see that if $E$ is a countable analytic equivalence
relation on $X$ and $F$ is an arbitrary Borel equivalence relation on
$Y$,  then $E
\leq_B F$ implies $E \sqsubseteq_{cB} F$ (the converse to this fact is
obvious). In fact, if $f$ is a Borel reduction of $E$ into $F$, consider the
 Borel set $Z = \{ (x,y) \in X \times Y \mid f(x)\, F\, y \}$ and the map
 $g \colon Z \to Y \colon (x,y) \mapsto y$: by countability of $E$, the
 Borel map $g$ is 
  countable-to-$1$, 
 so ${\rm range}(g)$ 
 (which is the $F$-saturation of ${\rm range}(f)$) is Borel as well,
 and there is a Borel right inverse $h$ of $g$. Therefore, ${\rm
   range}(g)$, $f$ and the composition of $h$ with projection on the
 fist coordinate witness $E \sqsubseteq_{cB} F$. This easy observation can be
 extended with a completely different and more difficult argument to the case
 of an arbitrary orbit equivalence relation $E$, see \cite[(Proof of)
 Corollary 5.2.4]{gao}.

\begin{proposition}[Gao]\label{propgao}
Let $E$ be an  orbit equivalence relation and $F$ be an arbitrary
Borel equivalence relation. Then ${E \leq_B F} \iff E \sqsubseteq_{cB} F$.
\end{proposition}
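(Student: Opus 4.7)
The reverse implication is immediate: any Borel reduction witnessing $E \sqsubseteq_{cB} F$ is in particular a Borel reduction witnessing $E \leq_B F$. For the forward direction, given a Borel reduction $f \colon X \to Y$ of $E$ into $F$, my plan is to show that the $F$-saturation
\[ Y_E = [f(X)]_F = \{ y \in Y \mid \exists x \in X\, (f(x) \, F \, y) \} \]
is Borel and to produce a Borel map $\psi \colon Y_E \to X$ with $f(\psi(y)) \, F \, y$ for every $y \in Y_E$; together with $\fhi = f$ this will witness $E \simeq_{cB} {F \restriction Y_E}$ and hence $E \sqsubseteq_{cB} F$.

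The key observation that makes the problem tractable is that the Borel set
\[ Z = \{ (y,x) \in Y \times X \mid f(x) \, F \, y \} \]
projects onto $Y_E$ via $\pi_Y$, and, because $f$ is a reduction of $E$ to $F$, each non-empty vertical fiber $Z_y = \{ x \in X \mid f(x) \, F \, y \}$ is a single $E$-class: if $x_0 \in Z_y$, then $x \in Z_y \iff f(x) \, F \, f(x_0) \iff x \, E \, x_0$. Thus both the Borelness of $Y_E = \pi_Y(Z)$ and the existence of the desired $\psi$ would follow at one stroke from a Borel uniformization theorem for $E$ applied to $Z$, i.e.\ a Borel choice of one representative from each of the Borel family of $E$-classes parametrized by $Y_E$.

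The main step, and what I expect to be the hard part, is precisely this uniformization: for a Borel action of a Polish group on $X$ inducing $E$, any Borel $B \subseteq Y \times X$ whose non-empty vertical fibers are single $E$-classes has Borel $Y$-projection and admits a Borel uniformization. This is the content of the proof of \cite[Corollary 5.2.4]{gao}, and relies on the Vaught-transform machinery and the idealistic structure of orbit equivalence relations developed in Becker--Kechris theory. Without such structural input it is not even clear that $Y_E$ is Borel, since the $F$-saturation of an analytic set need not be Borel in general, which is exactly why the hypothesis that $E$ is an orbit equivalence relation is essential (note that $F$ enters only in ensuring that $Z$ itself is Borel).

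Once $Y_E$ and $\psi$ are in hand, the verification that $\fhi = f$ and $\psi$ witness $E \simeq_{cB} {F \restriction Y_E}$ is routine and follows from $f(\psi(y)) \, F \, y$ (which yields both that $\psi$ reduces $F \restriction Y_E$ to $E$ and one half of the mutual-inversion condition on the quotients) together with the reduction property of $f$ (which, applied to $f(\psi(f(x))) \, F \, f(x)$, yields $\psi(f(x)) \, E \, x$).
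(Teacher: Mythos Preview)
Your proposal is correct and follows essentially the same approach as the paper: the paper sketches precisely this argument for the countable case (forming the Borel set $Z$ and uniformizing), and then defers the general orbit case to \cite[(Proof of) Corollary 5.2.4]{gao}, which is exactly the uniformization ingredient you invoke. Your identification of the key step---that each non-empty fiber $Z_y$ is a single $E$-class, so that the idealistic/Vaught-transform machinery for orbit equivalence relations yields both Borelness of $Y_E$ and a Borel section $\psi$---matches the intended argument.
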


As a corollary of Propositions \ref{propgao} and \ref{propSB}, we get that for Borel
orbit equivalence 
relations the notions of Borel bireducibility and classwise Borel isomorphism
coincide.

\begin{theorem}\label{theorBorelorbitequivrel}
If $E,F$ are Borel orbit equivalence relations, then $E \sim_B F$ if and
only if $E \simeq_{cB} F$.
\end{theorem}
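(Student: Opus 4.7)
The plan is to derive the theorem as an immediate corollary of the two propositions established just before it, Proposition \ref{propgao} (Gao) and Proposition \ref{propSB}. One direction is essentially definitional: if $E \simeq_{cB} F$, then the two Borel reductions witnessing the classwise Borel isomorphism already witness $E \sim_B F$, so there is nothing to prove.

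For the nontrivial direction, I would argue as follows. Assume $E \sim_B F$, where $E$ and $F$ are Borel orbit equivalence relations, so in particular each of them is simultaneously an orbit equivalence relation \emph{and} a Borel equivalence relation. From $E \leq_B F$, viewing $E$ as the orbit equivalence relation and $F$ as an arbitrary Borel equivalence relation, Proposition \ref{propgao} upgrades the reduction to $E \sqsubseteq_{cB} F$. Symmetrically, from $F \leq_B E$, applying Proposition \ref{propgao} with the roles exchanged (now $F$ is the orbit equivalence relation and $E$ is the Borel equivalence relation) yields $F \sqsubseteq_{cB} E$. Finally, Proposition \ref{propSB}, the Schröder--Bernstein style statement for classwise Borel embeddability, combines $E \sqsubseteq_{cB} F$ and $F \sqsubseteq_{cB} E$ into $E \simeq_{cB} F$, completing the proof.

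There is really no obstacle here: all the genuine content has been absorbed into the previous two propositions. The nontrivial work has already been done in Gao's result used for Proposition \ref{propgao}, which allows one to promote a Borel reduction out of an orbit equivalence relation into a classwise Borel embedding whenever the target is Borel, and in the Schröder--Bernstein argument of Proposition \ref{propSB}, which exploits the fact that the $F$-saturation of the image of a Borel $E$-invariant set under a classwise Borel embedding is itself Borel. The theorem then simply records that these two facts together collapse $\sim_B$ and $\simeq_{cB}$ on the class of Borel orbit equivalence relations.
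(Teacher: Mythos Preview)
Your proof is correct and matches the paper's approach exactly: the paper presents this theorem without an explicit proof, introducing it simply as a corollary of Propositions~\ref{propgao} and~\ref{propSB}, which is precisely the argument you spell out.
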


Notice that the results above cannot be extended to arbitrary orbit equivalence relations: in fact in \cite[Theorem 4]{gao2} it is proved that e.g.\ the relation of isomorphism on countable graphs does not classwise Borel embeds into (in fact it does not even faithful Borel reduce to) the relation of isomorphism on countable linear orders (or on ``simple'' countable trees), whereas all these isomorphism relations are $S_\infty$-complete (and hence pairwise Borel bireducible) by \cite[Theorem 5.5.1]{hodges} and \cite[Theorems 1 and 3]{friedmanstanley}, respectively.

One of the main limitations of Proposition \ref{propgao} is obviously that
the equivalence relation $F$ must be Borel. Our next goal will be to
show that in some specific situations (that is for some specific orbit
equivalence relations $E$) such restrictions can be removed,
albeit in this case we have  to compensate for this with the
requirement that $F$ is
an orbit equivalence relation as well.  In the terminology of
\cite{beckerkechris}, $(Y,a)$ is an \emph{effective Borel $G$-space}
if $Y$ is a $\Delta^1_1$ subset of a recursively presented Polish
space, $G$ is a recursively presented Polish group with recursive composition
and inverse functions, and $a$ is a $\Delta^1_1$ action of $G$ on
$Y$. Moreover, if $y$ is any element of $Y$ we denote by $\omega_1^y$ the first
(countable) ordinal not recursive-in-$y$, and put $\omega_1^{Gy} =
\inf \{ \omega_1^z \mid z \, F_a \, y \}$, $F_a$ being the orbit equivalence
relation  induced by $a$ on $Y$. 

\begin{lemma}\label{lemmacomeagre}
Assume $E$ is an arbitrary equivalence relation on the Polish
space $X$, and $F$ is an orbit equivalence relation on a standard
Borel space $Y$. If $E \leq_B F$ then there is an $E$-invariant Borel
comeagre set $C 
\subseteq X$ and an $F$-invariant Borel set $A \subseteq Y$ such that
$F \restriction A$ is a Borel equivalence relation and
$E \restriction C \leq_B
 F \restriction A$.
\end{lemma}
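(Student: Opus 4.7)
The plan is to combine the Becker--Kechris stratification of orbit equivalence relations with a classical effective-genericity argument. Let $f \colon X \to Y$ be a Borel reduction witnessing $E \leq_B F$, and fix a real parameter $p$ in which both the effective Borel $G$-space structure of $(Y,a)$ (so that $F = F_a$) and the function $f$ are $\Delta^1_1$. I will look for $C$ and $A$ of the form $f^{-1}(Y_\alpha)$ and $Y_\alpha$, where
\[
Y_\alpha := \{ y \in Y : \omega_1^{Gy} \leq \alpha \}
\]
for an appropriate countable ordinal $\alpha$.

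The first key ingredient is the Becker--Kechris theorem which, in the effective setting, guarantees that for each countable $\alpha$ (with possible cosmetic adjustments such as $\leq$ vs.\ $<$ or replacing $\alpha$ by the next admissible), the set $Y_\alpha$ is Borel and $F$-invariant, and $F \restriction Y_\alpha$ is a Borel equivalence relation. Applying this with $\alpha := \omega_1^p$ (or just above it, to keep $Y_\alpha$ Borel) yields the candidate $A := Y_\alpha$; then $C := f^{-1}(A)$ is automatically Borel and $E$-invariant, since $A$ is Borel $F$-invariant and $f$ reduces $E$ to $F$. Moreover $f$ itself witnesses $E \restriction C \leq_B F \restriction A$, so the only thing left to verify is that $C$ is comeagre in $X$.

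For the comeagreness, I will invoke the classical fact from effective descriptive set theory that the set
\[
G_p := \{ x \in X : \omega_1^{x \oplus p} = \omega_1^p \}
\]
is comeagre in $X$: its complement consists of those $x$ for which $x \oplus p$ recursively codes a well-order of type $\geq \omega_1^p$, which is a $\Sigma^1_1(p)$ and meager property. For any $x \in G_p$, since $f$ is $\Delta^1_1(p)$ we have $f(x) \leq_T x \oplus p$, hence $\omega_1^{f(x)} \leq \omega_1^{x \oplus p} = \omega_1^p$ and a fortiori $\omega_1^{Gf(x)} \leq \omega_1^{f(x)} \leq \omega_1^p \leq \alpha$. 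Thus $f(x) \in A$ and so $x \in C$, yielding $G_p \subseteq C$ and hence comeagreness of $C$.

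I expect the main obstacle to lie in invoking Becker--Kechris in precisely the right form: the hierarchy $\{Y_\alpha\}_{\alpha < \omega_1}$ must simultaneously cover $Y$, consist of Borel $F$-invariant sets, and have $F \restriction Y_\alpha$ Borel, and one needs to choose $\alpha$ so as to avoid the non-Borelness that can occur at critical ordinals (e.g.\ $\omega_1^{CK}$ in the trivial case). The effective-genericity ingredient giving comeagreness of $G_p$ is standard and should go through smoothly once the parameter $p$ has been fixed to absorb both the action $a$ and the reduction $f$.
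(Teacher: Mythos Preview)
Your approach is essentially identical to the paper's: both take $A=\{y\in Y:\omega_1^{Gy}\le\omega_1^p\}$ and $C=f^{-1}(A)$, invoke the (relativized) Becker--Kechris result that $A$ is Borel $F$-invariant with $F\restriction A$ Borel, and then argue that $C$ is comeagre via effective descriptive set theory. Your worry about ``critical ordinals'' is unfounded: $\alpha=\omega_1^p$ is exactly the right choice, and \cite[Proposition~7.2.2]{beckerkechris} (relativized) gives precisely what is needed without any adjustment.

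The only substantive difference is in the comeagreness step. The paper shows directly that $\{x:\omega_1^{(f(x),p)}\le\omega_1^p\}$ is comeagre by applying the Sacks--Tanaki basis theorem for nonmeagre $\Pi^1_1(p)$ sets to its complement. You instead pass through the larger-complement set $G_p=\{x:\omega_1^{x\oplus p}=\omega_1^p\}$ and cite its comeagreness as a classical fact; this is fine, but note that your justification sketch is slightly off: the complement of $G_p$ is $\Pi^1_1(p)$, not $\Sigma^1_1(p)$, and neither pointclass membership alone yields meagerness. The standard way to see that this $\Pi^1_1(p)$ set is meagre is precisely Sacks--Tanaki (a nonmeagre $\Pi^1_1(p)$ set contains a $\Delta^1_1(p)$ point, which would satisfy $\omega_1^{x\oplus p}=\omega_1^p$), so in the end both arguments rest on the same tool.
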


\begin{proof}
Assume that $F= F_a$ is  induced
 by the Borel action $a$ of the Polish group $G$ on  the standard Borel space $Y$,
 and let $f$ be a Borel reduction of $E$ into $F$.
Assume that $(Y,a)$ is an effective Borel $G$-space and $X$ a recursively presented Polish space (otherwise we
relativize),  and let
$p$ be a parameter such that $f$ is a $\Delta^1_1(p)$-function. 

We claim that $\{ x \in X \mid \omega_1^{(f(x),p)} \leq \omega_1^p
\}$ is comeagre in $X$. Granting this, $C = \{ x \in X \mid \omega_1^{Gf(x)}
\leq \omega_1^p \}$ is an $E$-invariant comeagre subset of $X$. Put $A = \{ y \in Y
\mid \omega_1^{Gy} \leq 
\omega_1^p \}$. Then $A$ is Borel and $F$-invariant (so that $C =
f^{-1}(A)$ is Borel as well), and by the
relativized version of \cite[Proposition 7.2.2]{beckerkechris} $F
\restriction A$ is Borel. But by definition of $C$, $f
\restriction C$ is a Borel map witnessing $E \restriction C \leq_B F
\restriction A$.

It remains to prove the claim. Assume toward a contradiction that
$B=\{ x \in X \mid \omega_1^{(f(x),p)} > \omega_1^p 
\}$ is nonmeagre in $X$. Since $B$ is a $\Pi^1_1(p)$ subset of $X$, by
the Sacks-Tanaki Basis Theorem for nonmeagre $\Pi^1_1(p)$ sets (see
\cite[Exercise 4F.20]{mosch}) there would be a $\Delta^1_1(p)$-point
$x_0 \in B$. But then $f(x_0)$ would be a $\Delta^1_1(p)$-point as
well, so that $\omega_1^{f(x_0),p} \leq \omega_1^p$, a contradiction! 
\end{proof}

\begin{remark} \label{remclosure}
In particular, if an arbitrary equivalence relation $E$ on a Polish space $X$ is Borel reducible to an orbit equivalence relation (resp.\ an isomorphism relation, or a countable equivalence relation) then there is an $E$-invariant Borel comeagre set $C \subseteq X$ such that $E \restriction C$ is Borel and is Borel bireducible with (in fact, classwise Borel isomorphic to) an orbit equivalence relation (resp.\ to an isomorphism relation, or to a countable equivalence relation).
\end{remark}

Call an analytic equivalence relation $E$ on the standard Borel space
$X$ \emph{invariant by comeagre subsets} if for every comeagre $C
\subseteq X$ one has $E \leq_B E \restriction C$ (note that
it is enough to restrict the attention to Borel
comeagre sets $C$). Examples of
invariant by comeagre subsets analytic equivalence relation are the
following:
\begin{itemize}
\item $E = \id(\RR)$: this is because it is a classical result that any comeagre subset of $\RR$ must contain a perfect subset;
\item $E = E_0$: by a classical fact (see e.g.\ \cite[Theorem
 3.2 ]{hjorth}), for every comeagre $C \subseteq {\rm dom}(E_0)$ we have $E_0 \restriction C \nleq_B \id(\RR)$
  (otherwise $E_0 \leq_B \id(\RR)$). But $E_0 \restriction C$
  is obviously Borel, so $E_0 \leq_B E_0 \restriction C$ by
  the Glimm-Effros Dichotomy (see e.g.\ \cite[Theorem 3.4.2]{beckerkechris}).
\end{itemize}

On the contrary, Lemma \ref{lemmacomeagre} implies, in particular,  that if $E$ is an orbit equivalence relation which is \emph{not} Borel then it cannot be invariant by comeagre subsets.

\begin{theorem}\label{theorRcompatibility}
 Let $E,F$ be orbit equivalence relations and $E$ be invariant by
 comeagre subsets. Then ${E \leq_B}
 F \iff {E \sqsubseteq_{cB} F}$. In particular, the result holds with $E = \id(\RR)$ and $E = E_0$.
\end{theorem}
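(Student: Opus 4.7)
The backward implication $E \sqsubseteq_{cB} F \imp E \leq_B F$ is immediate from the definitions, so the work is in the forward direction. The plan is to chain together Lemma \ref{lemmacomeagre}, the comeagre-invariance hypothesis, and Proposition \ref{propgao} (Gao), and then lift the conclusion from a Borel restriction of $F$ back up to $F$ itself.

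Assume $E \leq_B F$. Apply Lemma \ref{lemmacomeagre}: one obtains an $E$-invariant Borel comeagre $C \subseteq X$ and an $F$-invariant Borel $A \subseteq Y$ with $F \restriction A$ Borel, such that $E \restriction C \leq_B F \restriction A$. Because $E$ is invariant by comeagre subsets, $E \leq_B E \restriction C$, so composing the two reductions yields $E \leq_B F \restriction A$. Now $E$ is an orbit equivalence relation and $F \restriction A$ is a Borel equivalence relation, so Proposition \ref{propgao} (Gao) upgrades this Borel reduction to a classwise Borel embedding $E \sqsubseteq_{cB} F \restriction A$.

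The final step is to observe that a classwise Borel embedding into $F \restriction A$ automatically yields one into $F$. Indeed, by definition there is an $(F \restriction A)$-invariant Borel set $B \subseteq A$ with $E \simeq_{cB} (F \restriction A) \restriction B = F \restriction B$. Since $A$ is $F$-invariant in $Y$, any $(F \restriction A)$-saturated subset of $A$ is automatically $F$-saturated in $Y$, so $B$ is an $F$-invariant Borel subset of $Y$, witnessing $E \sqsubseteq_{cB} F$.

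For the ``in particular'' clause, we only need to recall from the discussion preceding the theorem that both $\id(\RR)$ and $E_0$ are orbit equivalence relations which are invariant by comeagre subsets, so the hypotheses apply. There is no real obstacle in this proof beyond being careful about the final lifting step; the heavy lifting has already been done by Lemma \ref{lemmacomeagre} (which extracts a Borel piece on which Gao's theorem is applicable) and by the comeagre-invariance of $E$ (which allows us to replace $E \restriction C$ by $E$).
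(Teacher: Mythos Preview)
Your proof is correct and follows exactly the same route as the paper: apply Lemma~\ref{lemmacomeagre}, use comeagre-invariance to recover $E \leq_B F \restriction A$, then Proposition~\ref{propgao} to upgrade to $E \sqsubseteq_{cB} F \restriction A$, and finally note that $A$ being Borel and $F$-invariant gives $E \sqsubseteq_{cB} F$. The only difference is that you spell out the last lifting step in slightly more detail than the paper does.
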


\begin{proof}
One direction is obvious. For the other direction,
apply Lemma \ref{lemmacomeagre} to $E$ and $F$, use the fact that $E$ is invariant by comeagre subsets,
 and then apply Lemma
\ref{propgao}  to $E$ and $F \restriction A$ to get $E
\sqsubseteq_{cB} F \restriction A$: since $A$ is $F$-invariant
and Borel, this means $E \sqsubseteq_{cB} F$ as well.
\end{proof}

Theorem \ref{theorRcompatibility} shows, in particular, that if $E$ is
either $\id(\RR)$ or $E_0$ and $F$ is an orbit equivalence relation
then $E \leq_B F \iff E \sqsubseteq_{cB} F$. We are now going to
show that in this case there are other natural conditions which
are equivalent to the 
previous ones. 
Such conditions arise from the natural idea of considering
disjoint unions of analytic equivalence relations: given $E,F$ on
standard Borel spaces $X,Y$, respectively, we denote by $E \sqcup F$ the
analytic equivalence
relation on $X \sqcup Y$ (where $\sqcup$ denotes disjoint union) defined  by $x\, (E \sqcup F)\, y$ if and only if either
$x,y \in X$ and $x\, E\, y$, or else $x,y \in Y$ and $x\, F\, y$. Disjoint union seems a natural operation to be considered because if $E,F$ belong
to some natural class of equivalence relations (such as isomorphism
relations, orbit equivalence relations, and so on), then $E \sqcup F$ is an
analytic equivalence relation in the same class in which both $E$ and
$F$ classwise Borel embed.

\begin{proposition}\label{propRsum}
Let $E$ be an orbit equivalence relations and $F$ be either $\id(\RR)$ or $E_0$. Then the following are equivalent\footnote{Since in both cases $F \sqcup F \simeq_{cB} F$ by Proposition
  \ref{propgao} and the fact that $F \sqcup F \sqsubseteq_{cB} F$, in this proposition we could replace  all occurrences of $\leq_B$ and
  $\sim_B$ with, respectively, $\sqsubseteq_{cB}$ and $\simeq_{cB}$.}: 
\begin{enumerate}[i)]
\item $F \leq_B E$;
\item $F \sqsubseteq_{cB} E$;
 \item $E \sim_B {E' \sqcup F}$ for some analytic equivalence relation $E'$; 
\item $E \sim_B E \sqcup F$.
\end{enumerate}
\end{proposition}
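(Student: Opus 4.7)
The plan is to close the equivalences via the cycle (ii) $\imp$ (iv) $\imp$ (iii) $\imp$ (i) $\imp$ (ii). The last implication is Theorem \ref{theorRcompatibility}, applied with the roles of $E$ and $F$ swapped: it is legal because both $\id(\RR)$ and $E_0$ are invariant by comeagre subsets, as noted just before that theorem. The implication (iv) $\imp$ (iii) is trivial (take $E' = E$), while (iii) $\imp$ (i) follows from $F \leq_B E' \sqcup F \sim_B E$. Hence only (ii) $\imp$ (iv) requires work.

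For this I would first record the auxiliary fact $F \simeq_{cB} F \sqcup F$ highlighted in the footnote of the statement. The inclusion $F \sqsubseteq_{cB} F \sqcup F$ is immediate, and for the reverse $F \sqcup F \sqsubseteq_{cB} F$ one invokes Proposition \ref{propgao} (applicable since $F$ is Borel and $F \sqcup F$ is an orbit equivalence relation) after exhibiting $F \sqcup F \leq_B F$: for $F = \id(\RR)$ any Borel bijection $\RR \sqcup \RR \to \RR$ works, and for $F = E_0$ one sends $x$ in the $i$-th copy of $2^\omega$ (where $i \in \{0,1\}$) to the sequence having $i$ at every even coordinate and $x$'s bits at the odd coordinates, which is a Borel reduction since the two ranges lie in different (in fact disjoint and $E_0$-invariant) Borel strata of $2^\omega$. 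Proposition \ref{propSB} then upgrades the two embeddings into a classwise Borel isomorphism.

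Assuming (ii), fix an $E$-invariant Borel set $A \subseteq \dom(E)$ with $E \restriction A \simeq_{cB} F$, and set $A^c = \dom(E) \setminus A$. The chain
\begin{align*}
E \sqcup F &\simeq_{cB} (E \restriction A^c) \sqcup (E \restriction A) \sqcup F \simeq_{cB} (E \restriction A^c) \sqcup F \sqcup F \\
&\simeq_{cB} (E \restriction A^c) \sqcup F \simeq_{cB} (E \restriction A^c) \sqcup (E \restriction A) \simeq_{cB} E
\end{align*}
then establishes (iv) in the stronger form $E \simeq_{cB} E \sqcup F$ promised by the footnote; the second step uses $E \restriction A \simeq_{cB} F$, the third uses $F \sqcup F \simeq_{cB} F$, and the fourth uses $E \restriction A \simeq_{cB} F$ again. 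The only mild obstacle is the routine bookkeeping that $\simeq_{cB}$ behaves well under disjoint unions and that $E \simeq_{cB} (E \restriction A^c) \sqcup (E \restriction A)$ whenever $A$ is $E$-invariant Borel; both facts follow directly from unwinding Definition \ref{defBorelisomorphism}, using that the canonical identification of $\dom(E)$ with $A^c \sqcup A$ is a Borel bijection respecting classes.
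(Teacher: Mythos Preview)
Your proof is correct and uses essentially the same ingredients as the paper's: the implication (i) $\imp$ (ii) via Theorem \ref{theorRcompatibility}, the splitting of $E$ along the invariant Borel set witnessing (ii), and the absorption $F \sqcup F \leq_B F$ (or its $\simeq_{cB}$ upgrade). The only cosmetic difference is that the paper runs the cycle (i) $\imp$ (ii) $\imp$ (iii) $\imp$ (iv) $\imp$ (i), factoring your (ii) $\imp$ (iv) through (iii) with $E' = E \restriction A^c$, whereas you prove (ii) $\imp$ (iv) directly and in the stronger $\simeq_{cB}$ form advertised in the footnote.
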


\begin{proof}
i) $\imp$ ii) by Theorem
\ref{theorRcompatibility}. ii)
 $\imp$ iii) because if $Y_F \subseteq {\rm dom}(E) = Y$ is Borel,
 $E$-invariant, and such that $F \simeq_{cB} E \restriction Y_F$, then
 iii) is obviously satisfied with $E' = E \restriction (Y \setminus Y_F)$. 
Let now $E'$ witness iii): since clearly $F \sqcup F \leq_B F$, then ${E \sqcup F} \sim_B {E' \sqcup F \sqcup F} \sim_B {E' \sqcup F}
\sim_B E$, so iv) holds.
Finally,  iv) $\imp$ i) because $F \leq_B E \sqcup F \sim_B E$.
\end{proof}

Proposition \ref{propRsum} allows us to extend the example given in
\cite{friedmanmottoros} of a pair of analytic equivalence relations
which are Borel bireducible but not classwise Borel isomorphic to the context of
arbitrary orbit equivalence relations (this result should also be contrasted
with Theorem \ref{theorBorelorbitequivrel} above).

\begin{theorem}\label{theornonborelisomorphic}
Let $E$ be an orbit equivalence relation such that $\id(\RR)
\leq_B E$. Then there is an analytic equivalence relation $F \sim_B E$
such that  $F \not\simeq_{cB} E$.
\end{theorem}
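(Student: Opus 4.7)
The plan is to realize $F$ as a disjoint sum $F := E' \sqcup F_1$, where $E'$ is a ``complementary'' piece of $E$ and $F_1$ is a Borel equivalence relation (namely, the example from \cite{friedmanmottoros}) satisfying $F_1 \sim_B \id(\RR)$ but $F_1 \not\simeq_{cB} \id(\RR)$. Since $\id(\RR) \leq_B E$, Theorem \ref{theorRcompatibility} yields a Borel $E$-invariant set $Y_0 \subseteq \dom(E)$ with $\id(\RR) \simeq_{cB} E \restriction Y_0$; setting $E' := E \restriction (\dom(E) \setminus Y_0)$, Proposition \ref{propRsum} gives $E \sim_B E' \sqcup \id(\RR)$. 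Combining this with $F_1 \sim_B \id(\RR)$, one immediately obtains $F \sim_B E' \sqcup \id(\RR) \sim_B E$.

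To prove $F \not\simeq_{cB} E$, I would assume toward a contradiction that Borel maps $\fhi \colon \dom(E) \to \dom(F)$ and $\psi \colon \dom(F) \to \dom(E)$ witness $E \simeq_{cB} F$. Consider the Borel $F$-invariant set $W := \dom(F_1) \subseteq \dom(F)$ and its preimage $Y' := \fhi^{-1}(W)$, which is a Borel $E$-invariant subset of $\dom(E)$ since $\fhi$ is a reduction. Using that $\hat{\fhi}$ and $\hat{\psi}$ are inverse bijections on quotients together with the $F$-invariance of $W$, one checks that $\psi(W) \subseteq Y'$, so that the restrictions $\fhi \restriction Y'$ and $\psi \restriction W$ witness $E \restriction Y' \simeq_{cB} F_1$.

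The crucial observation --- and the step I expect to be the main obstacle --- is that $E \restriction Y'$ is a \emph{Borel} orbit equivalence relation. It is an orbit equivalence relation as a restriction of the orbit equivalence $E$ to an invariant Borel set, and it is Borel because $F_1$ is itself Borel (any analytic relation $\leq_B \id(\RR)$ being the Borel preimage of the diagonal) and $E \restriction Y' = ((\fhi \restriction Y') \times (\fhi \restriction Y'))^{-1}(F_1)$. Once this is established, Theorem \ref{theorBorelorbitequivrel} applied to the Borel orbit equivalence relation $E \restriction Y' \sim_B \id(\RR)$ upgrades this to $E \restriction Y' \simeq_{cB} \id(\RR)$; composing with $E \restriction Y' \simeq_{cB} F_1$ then gives $F_1 \simeq_{cB} \id(\RR)$, contradicting the choice of $F_1$.
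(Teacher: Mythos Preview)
Your argument is correct, but it takes a different route from the paper's. The paper does not decompose $E$: it simply sets $F = E \sqcup E_B$, where $E_B$ is the vertical-section equivalence relation on a Borel set $B \subseteq \RR \times \RR$ with all sections nonempty but no Borel uniformization (this $E_B$ is essentially your $F_1$). The contradiction is then obtained by hand: the preimage $X' = \fhi^{-1}(B)$ carries a smooth Borel orbit equivalence relation, so by Burgess's theorem it admits a Borel selector $s$, and the map $\fhi \circ s \circ (\psi \restriction B)$ produces an explicit Borel uniformization of $B$.

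Your version first uses Theorem~\ref{theorRcompatibility} to split off an $\id(\RR)$-piece of $E$, replaces it by the black-box example $F_1$ from \cite{friedmanmottoros}, and then derives the contradiction by appealing to Theorem~\ref{theorBorelorbitequivrel} rather than constructing a uniformization explicitly. This is more indirect---it leans on the machinery of Section~\ref{sectioncomparison}---but it has the advantage of isolating the phenomenon cleanly: any $\simeq_{cB}$-copy of $F_1$ inside an orbit equivalence relation would force $F_1 \simeq_{cB} \id(\RR)$. The paper's proof is shorter and more self-contained, effectively redoing the \cite{friedmanmottoros} example in situ; both arguments ultimately rest on the existence of Borel selectors for smooth orbit equivalence relations.
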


\begin{proof}
Let $X$ be the domain of $E$, and let $B \subseteq \RR \times \RR$ be
a Borel set with nonempty vertical sections and with no 
Borel uniformization. Consider the Borel
equivalence relation $E_B$ on $B$ given by the vertical sections of $B$: we claim that $F = E \sqcup E_B$ works. 
Clearly $E
\leq_B E \sqcup E_B$. Moreover, since the projection on the first
coordinate witnesses $E_B \leq_B {\id(\RR)}$, we have $E \sqcup E_B \leq_B {E
  \sqcup {\id(\RR)}}$. But by Proposition 
\ref{propRsum} $E \sqcup {\id(\RR)} \leq_B E$, whence $E \sqcup E_B
\leq_B E$.

Finally, assume towards a contradiction that $E$ and $E \sqcup E_B$ are classwise
Borel isomorphic, and let $\fhi \colon X \to X \sqcup B$ and $\psi
\colon X \sqcup B \to X$ be witnesses
of this fact. The set $X' = \fhi^{-1}(B)$ is Borel and $E$-saturated, so
that $E' = E \restriction X'$ is a Borel orbit equivalence relation. Moreover the composition
of $\fhi$ with the projection on the first coordinate shows that $E'$
is smooth, so by a theorem of Burgess (see e.g.\ \cite[Corollary
5.4.12]{gao}) there is a Borel selector $s \colon X' \to X'$ for $E'$. This
implies that $f = \fhi \circ s \circ (\psi \restriction B) \colon B \to B$ is a
well-defined Borel function, and that ${\rm range}(f) = \{ b \in B
\mid b=f(b) \}$ is a Borel uniformization of $B$, a contradiction!
\end{proof}

Given two analytic equivalence relations $E,F$, say that $E$
\emph{essentially refines}\footnote{This condition is potentially
  stronger than just requiring 
  the existence of an analytic equivalence relation $E' \supseteq E$
  such that $E' \sim_B F$: however, if $F$ is a countable analytic
  equivalence relation then the two notions actually coincide.} $F$ if and only if there is an analytic
equivalence relation $E' \supseteq E$ such that $E' \simeq_{cB} F$.
The following technical result will be used in
the next section.

\begin{proposition} \label{propessentiallyrefines}
Let $E$ be an orbit equivalence relations and $F$ be either $\id(\RR)$ or $E_0$. If $F \leq_B E$ then $E$ essentially refines $F$. 
Moreover, the converse holds if $F = \id(\RR)$.
\end{proposition}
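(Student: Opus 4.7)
The plan is to split the biconditional into its two implications. The forward implication relies on promoting $F \leq_B E$ to $F \sqsubseteq_{cB} E$ via Theorem \ref{theorRcompatibility}, which places a concrete copy of $F$ inside $E$ on which to build the desired coarsening. The converse in the $\id(\RR)$ case is a short direct argument using that distinct reals are literally unequal.

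For the forward direction, I would first invoke Theorem \ref{theorRcompatibility} to produce a Borel $E$-invariant set $Y_F \subseteq X := \dom(E)$ and Borel maps $\alpha \colon \dom(F) \to Y_F$, $\beta \colon Y_F \to \dom(F)$ witnessing $F \simeq_{cB} E \restriction Y_F$. Fix any $y_0 \in Y_F$; since $E$ is an orbit equivalence relation on a standard Borel space the class $[y_0]_E$ is Borel, and hence so is $D := [y_0]_E \cup (X \setminus Y_F)$, which is clearly $E$-invariant. The candidate coarsening is
\[
E' := E \cup (D \times D),
\]
an analytic equivalence relation containing $E$ whose classes are exactly the $E$-classes contained in $Y_F \setminus [y_0]_E$ together with one extra class, namely $D$ itself (so $[y_0]_{E'} = D$). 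The desired $\simeq_{cB}$ with $F$ will be witnessed by $\psi' := \alpha$ in one direction and, in the other, by the Borel map $\fhi' \colon X \to \dom(F)$ that agrees with $\beta$ on $Y_F$ and sends every $x \notin Y_F$ to the fixed value $\beta(y_0)$. A short case analysis using that $\alpha, \beta$ reduce between $F$ and $E \restriction Y_F$ should show that $\fhi'$ and $\psi'$ reduce $E'$ to $F$ and vice versa, and that their factorings to quotients are mutually inverse bijections.

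For the converse with $F = \id(\RR)$, suppose $E' \supseteq E$ is an analytic equivalence relation with $E' \simeq_{cB} \id(\RR)$, witnessed by Borel maps $\fhi \colon X \to \RR$ and $\psi \colon \RR \to X$. I claim $\psi$ is already a Borel reduction of $\id(\RR)$ into $E$: the implication $r = r' \imp \psi(r) E \psi(r')$ is immediate from reflexivity, and conversely if $\psi(r) E \psi(r')$ then $\psi(r) E' \psi(r')$ since $E \subseteq E'$, which forces $r = r'$ because $\psi$ reduces $\id(\RR)$ to $E'$. Hence $\id(\RR) \leq_B E$.

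The main obstacle is in the forward direction, where it does not suffice to produce an $E' \supseteq E$ with $E' \sim_B F$: we need the strictly finer $E' \simeq_{cB} F$, so the construction must respect the bijection of quotients. This is precisely why the portion of $X$ outside $Y_F$ has to be merged into a pre-existing $E$-class (rather than being folded into a brand new class), so that $X/_{E'}$ remains in bijection with $Y_F/_{E}$, and hence, through $\alpha, \beta$, with $\dom(F)/_F$. The same trick fails to give the converse for $F = E_0$, since $E_0$-equivalent points need not coincide, and indeed the argument for the $\id(\RR)$ converse uses literal equality in an essential way.
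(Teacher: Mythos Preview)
Your proof is correct and follows essentially the same plan as the paper: promote $F \leq_B E$ to $F \sqsubseteq_{cB} E$ via Theorem~\ref{theorRcompatibility}, then coarsen $E$ by collapsing the complement of the invariant set $Y_F$ into a single class; the converse for $\id(\RR)$ is handled identically.

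The one genuine difference is in how the collapsing is done. The paper makes $X \setminus X_F$ into a \emph{new} $E'$-class, so that $X/_{E'}$ has one more point than $X_F/_E$; it then argues $F \sqsubseteq_{cB} E'$ and $E' \sqsubseteq_{cB} F \sqcup \id(1) \sqsubseteq_{cB} F$, finishing with the Schr\"oder--Bernstein argument of Proposition~\ref{propSB}. You instead merge $X \setminus Y_F$ into a \emph{pre-existing} class $[y_0]_E$, which keeps $X/_{E'}$ in literal bijection with $Y_F/_E$ and lets you write down explicit inverse witnesses $\fhi', \psi'$ without invoking Proposition~\ref{propSB}. Your route is slightly more hands-on and uses the (standard) fact that individual orbits of a Borel group action are Borel; the paper's route is a line shorter but relies on the absorption $F \sqcup \id(1) \sqsubseteq_{cB} F$. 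Both are perfectly fine.
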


\begin{proof}
Under our assumption, $F \sqsubseteq_{cB} E$ by Proposition
\ref{propRsum}. Let $X$ be the domain of $E$ and $X_F \subseteq X$ be
Borel $E$-invariant and such that $F \simeq_{cB} E \restriction
X_F$. Define $E' \supseteq E$ on $X$ by letting $x \, E' \, y$ if and only if
either $x,y \notin X_F$ or $x \, E \, y$. Then $F \sqsubseteq_{cB} E'$ and
$E' \sqsubseteq_{cB} F \sqcup \id(1) \sqsubseteq_{cB} F$, so that $E' \simeq_{cB} F$
by Proposition \ref{propSB}.
The extra fact about $\id(\RR)$ follows from the fact that any witness of
$ \id(\RR) \leq_B E'$ witnesses $\id(\RR) \leq_B E$ as well.
\end{proof}

\section{The main result}\label{sectionEsimple}

In this section we will show our main result: if $E$ is a
quasi-isomorphism relation and $R$ is an analytic quasi-order such that at least one of
$E,E_R$ has either countably or
perfectly many equivalence classes, then there is an $\Linf$-elementary class representing the pair
$(E,R)$ if and only if either $E_R \leq_B \id(\RR),E$ or $\id(\RR)
\leq_B E$. Notice that this implies the corresponding result for pairs
$(E,F)$ consisting of two analytic equivalence
relations, therefore from this point onward we will just consider the
case of pairs of the form $(E,R)$.
 
We first consider the basic case, namely when $E_R \sim_B \id(1)$.
From this point onward, $\hat{\L}$ will denote the language of graphs consisting of just one binary relation symbol, while $\L$ will denote the language of \emph{ordered graphs}, that is a language consisting of two binary relation symbols: in particular, the interpretation of the second symbol (in a certain structure) will be always called \emph{order (relation)} of the structure\footnote{This is a little abuse of terminology: in fact, as we will see, such relation will be required to be just a transitive relation, and not an order in the usual sense.}. Finally, an \emph{ordered set-theoretical tree} is a set-theoretical tree with an extra transitive (binary) relation on its nodes.

\begin{theorem}\label{theorequivsimple}
 Let $E$ be a quasi-isomorphism
 relation. Then there is an $\L_{\omega_1 \omega}$-elementary class
 $\C$ consisting of ordered set-theoretical trees whose order is an equivalence relation (so that, in particular, it is reflexive) such that $E \sim_B {\cong_\C}$ and
 ${\sqsubseteq_\C} = {\equiv_\C} \sim_B {\id(1)}$ (in fact, if $E$ itself is an isomorphism relation then $E \simeq_{cB} {\cong_\C}$).
\end{theorem}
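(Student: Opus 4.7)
The plan is to construct $\C$ explicitly by coding connected graphs as ordered set-theoretical trees whose order is an equivalence relation. Since $E$ is a quasi-isomorphism relation, I would first fix (via Hodges's theorem, as recalled in the introduction) an $\Linf$-elementary class $\E$ of connected graphs with domain $\omega$ such that $E \sim_B {\cong_\E}$; in the case where $E$ is itself an isomorphism relation, this can be done so that $E \simeq_{cB} {\cong_\E}$. The problem then reduces to producing a Borel, isomorphism-invariant coding $G \mapsto T_G$ from $\E$ to a suitable class $\C$ of structures in the ordered-graph language.

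I would define $T_G$ as follows. Take a root $r$ with two kinds of children: infinitely many copies of a fixed ``universal'' ordered set-theoretical tree $U$ with equivalence relation, and a single distinguished ``coding'' child $s_G$ below which sits an ordered set-theoretical tree $S_G$ that canonically encodes $G$. The universal tree $U$ should be chosen so that every countable ordered set-theoretical tree with equivalence relation embeds into it; concretely one can take $U$ to be a suitably saturated tree based on $\seqo$ with equivalence classes arranged so that every countable pattern is realized on some subtree. The coding piece $S_G$ is built so that it canonically determines $G$ up to isomorphism and so that it is readily distinguishable from $U$ by a local isomorphism invariant --- e.g.\ by making the branching degree at the top of $S_G$ equal to a fixed small finite number while every copy of $U$ has infinite branching at its root. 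Finally, one arranges the equivalence relation so that no node in a copy of $U$ is equivalent to a node in $S_G$, so that the two parts remain ``disentangled'' under isomorphism.

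Three things then have to be verified. First, the class $\C$ obtained as the isomorphism-closure of $\{T_G : G \in \E\}$ is $\Linf$-elementary: the construction is Borel and isomorphism-invariant, so Lopez-Escobar applies. Second, ${\cong_\C} \sim_B {\cong_\E}$ (and ${\simeq_{cB}}$ in the isomorphism-relation case): the map $G \mapsto T_G$ is a Borel reduction of $\cong_\E$ to $\cong_\C$, since any isomorphism between $T_G$ and $T_{G'}$ fixes the root and must send $s_G$ to $s_{G'}$ by the branching-degree marker, hence restricts to an isomorphism $S_G \to S_{G'}$ and, through the coding, to an isomorphism $G \to G'$; when $E$ is already an isomorphism relation the explicit recovery procedure $T \mapsto G$ yields a Borel inverse on isomorphism classes, giving $E \simeq_{cB} {\cong_\C}$ via Proposition \ref{propSB}. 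Third, ${\sqsubseteq_\C} = {\equiv_\C} \sim_B {\id(1)}$: for any two $T_G, T_{G'} \in \C$, an embedding $T_G \hookrightarrow T_{G'}$ is produced by sending $r$ to $r$, mapping each copy of $U$ below $r$ into a distinct copy of $U$ below $r$ in $T_{G'}$, and embedding the coding piece $S_G$ into yet another copy of $U$ in $T_{G'}$, exploiting the universality of $U$; reversing the roles gives the reverse embedding, so all elements of $\C$ are pairwise bi-embeddable and in fact embeddability is symmetric.

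The main obstacle is the apparent tension between the two requirements: the coding must record enough information to distinguish non-isomorphic graphs, but the embeddability quasi-order must collapse entirely. The resolution exploits the asymmetry between isomorphism and embedding: embeddings are free to ``forget'' the distinction between the $U$ part and the $S_G$ part by using the universality of $U$ to absorb $S_G$, whereas isomorphisms must preserve globally recognizable invariants (such as the branching signature at the root's children) that keep the $S_G$ part identifiable and therefore keep $G$ recoverable. Making the universal tree $U$ genuinely universal for countable ordered set-theoretical trees with equivalence relation, while simultaneously building the coding $G \mapsto S_G$ in a sufficiently rigid (but still $U$-embeddable) way, is the technical heart of the argument.
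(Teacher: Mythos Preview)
Your high-level strategy --- provide a ``universal'' part that absorbs any embedding, together with a ``coding'' part that pins down the isomorphism type --- is exactly the right idea, and is in spirit what the paper does. But the implementation you sketch has a genuine gap and also differs from the paper's route in a way worth understanding.

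The gap is your claim that $\C$, defined as the isomorphism-closure of $\{T_G : G \in \E\}$, is $\Linf$-elementary because ``the construction is Borel and isomorphism-invariant, so Lopez-Escobar applies''. Lopez-Escobar requires $\C$ to be \emph{Borel} and invariant; invariance is free, but Borelness is not. The map $G \mapsto T_G$ is Borel, so its image is analytic, and the isomorphism-saturation of an analytic set is in general only analytic. You have not explained why this particular saturation is Borel, and this is not automatic. (Relatedly, the ``Borel inverse'' you need for the $\simeq_{cB}$ claim presupposes that you already know $\C$ is Borel and that the recovery $T \mapsto G$ can be made into a genuine Borel function, not just a function on isomorphism classes.) The paper handles exactly this point by exhibiting $\C$ as the range of an \emph{injective} Borel map on a Borel domain (using the argument of \cite[Theorem~4.1]{friedmanmottoros}): one finds a Borel $B \subseteq S_\infty$ so that $(x,b) \mapsto j_\L(b,T_x)$ is injective and its range coincides with the saturation; then both Borelness of $\C$ and the Borel inverse come for free.

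As for the construction itself, the paper does not introduce a separate universal tree $U$ and a separate coding tree $S_G$. Instead it works directly with the Friedman--Stanley coding: for each graph $x$ one builds the tree $\hat{T}_x$ on $\seqo \sqcup \omega$, attaching a terminal node to $s \in \seqo$ according to whether the ``relevant pair'' $\rp(s^\EE)$ (computed from the even-indexed entries of $s$) is an edge of $x$, and then imposes a single fixed equivalence relation $E_x$ (independent of $x$). The point is that at sequences of \emph{even} length a terminal successor is always present, independently of $x$; so to embed $T_x$ into $T_y$ one simply maps $T_x \cap \seqo$ into the even-length part of $T_y$ (which is possible since every relevant pair is realized among the even-length extensions of any node) and then matches terminal nodes. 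In effect the even-length part of $T_y$ \emph{is} your universal $U$, already sitting inside every $T_y$, and the full tree simultaneously plays the role of $S_G$. This integrated design sidesteps the delicate bookkeeping you would otherwise need to make the equivalence classes of $S_G$ line up with those of $U$ under embedding while remaining disjoint from them under isomorphism. Your modular approach can be made to work, but you would still need (i) an explicit $U$ with the right universality for ordered trees \emph{with equivalence relation} (which is not entirely trivial --- the equivalence classes must be ubiquitous above every node), (ii) a concrete coding $G \mapsto S_G$, and (iii) the Borelness argument above.
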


\begin{proof}
 Let $\hat{\C}$ be an arbitrary $\hat{\L}_{\omega_1 \omega}$-elementary class
 such that ${\cong_{\hat{\C}}} \sim_B E$. For every $x \in
 \hat{\C}$, construct the set theoretical tree $\hat{T}_x$ on $\seqo \sqcup
 \omega$  in the following way:
 consider the tree $\seqo$ with the inclusion relation. For any $s
 \in \seqo$, let $s^\EE$ be the sequence $\langle s(2i) \mid 2i < |s|
 \rangle$. If $s \neq
 \emptyset$, denote  by $\rp(s)$ the pair $(s(n),s(m))$ (also
 called \emph{relevant pair} of $s$), where
 $n,m$ are such that $|s| = \langle n,m \rangle +1$ and $\langle \cdot, \cdot \rangle$ is any bijection between $\omega \times \omega$ and $\omega$ such that $n,m \leq \langle n,m \rangle$. Now adjoin a new
 unique terminal successor taken from $\omega$ to $s$ just in case
 either $|s|$ is even, or else $|s|$ is odd and $\rp(s^\EE)$ is an edge  of $x$, 
 ensuring that at the end of this process every $n \in \omega$ is linked to some $s \in
 \seqo$. Finally, define $T_x$ by adjoining the following equivalence
 relation $E_x$ (which actually is independent from $x$) on the nodes of $\hat{T}_x$: 
\[ s\, E_x\, t  \iff (s,t \notin \seqo) \vee (s=t=\emptyset) \vee (s,t \in \seqo \setminus \{ \emptyset \} \wedge \rp(s^\EE) =
 \rp(t^\EE)). \]

Following \cite[proof of Theorem 1.1.1]{friedmanstanley}, one can easily check that
${x \cong y} \iff {T_x \cong T_y}$. (Any isomorphism between $x,y \in \hat{\C}$ can be lifted to an isomorphism of $\seqo$ into itself respecting the equivalence relations $E_x$ and $E_y$, and then be naturally extended to an isomorphism between $T_x$ and $T_y$. Conversely, from any isomorphism between $T_x$ and $T_y$ one can recover by a back and forth argument an isomorphism between $x$ and $y$.) Let $j_\L \colon S_\infty \times \Mod(\L) \to \Mod(\L)$ be the standard logic action of $S_\infty$ on $\Mod(\L)$. Arguing as in the proof of \cite[Theorem 4.1]{friedmanmottoros}, we can then find a Borel $B \subseteq S_\infty$
such that the map $h \colon (x,b) \mapsto j_\L(b,T_x)$
defined on $\hat{\C} \times B$ is injective and for every $x \in \hat{\C}$ and
$q \in S_\infty$ there are $x' \in \hat{\C}$ and $b \in B$ such that
$j_\L(q,T_x) = j_\L(b,T_{x'})$: therefore the range of this map is Borel and
coincides with the saturation under isomorphism of $\{ T_x \mid x \in \hat{\C}\}$, i.e.\ it is an $\Linf$-elementary class $\C$. Moreover
${\cong_\C} \sim_B {\cong_{\hat{\C}}}$, the equivalence being witnessed by
the Borel map $x \mapsto T_x$ and, for the other direction, by the composition of $h^{-1}$ with the projection on the first coordinate.

It remains to prove that $T_x \sqsubseteq T_y$ for every $x,y
\in \hat{\C}$. This can be easily done by first constructing an embedding of
$T_x \cap \seqo$ into $T_x \cap \{ s \in \seqo \mid |s| \text{ is
  even}\}$ (use the fact that for every $s, t \in \seqo$ there is $t
\subseteq v \in \seqo$ such that $|v|$ is even and $\rp(s^\EE) =
\rp(v^\EE)$), and then extending it to $T_x$ using the fact that each
$s$ of even length has always a successor not in $\seqo$. 
\end{proof}

We will now discuss the case in which ${\id(\RR)} \leq_B
E$.
Recall
from \cite{friedmanmottoros}
that a \emph{combinatorial tree} is a connected acyclic graph, while
an \emph{ordered} combinatorial tree is a combinatorial tree with
an extra transitive (binary) relation defined on its nodes. We  need
the following result from \cite[Theorems 3.3 and 3.5]{friedmanmottoros}. 

\begin{theorem}[\cite{friedmanmottoros}] \label{theorfrimot}
For every analytic quasi-order $R$, there is an $\Linf$-elementary
class $\C$ consisting of ordered combinatorial trees whose order
relation is a strict well-founded order (so that, in particular, it is irreflexive)  such that
${\sqsubseteq_\C}  \sim_B R$ and ${\cong_\C} \simeq_{cB} \id(\RR)$ (and moreover $E_R \simeq_{cB} {\equiv_\C}$).
\end{theorem}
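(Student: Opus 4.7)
The plan is to build, for each $x$ in the domain $X$ of the analytic quasi-order $R$, an ordered combinatorial tree $T_x$ so that a single Borel map $x \mapsto T_x$ simultaneously witnesses ${\sqsubseteq_\C} \sim_B R$, ${\cong_\C} \simeq_{cB} {\id(\RR)}$ and $E_R \simeq_{cB} {\equiv_\C}$. I first put $R$ into a usable combinatorial normal form, then describe the two components that go into each $T_x$, and finally outline the verification and identify the main obstacle.

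Since $R$ is analytic on a standard Borel space, after identifying $X$ with $2^\omega$ via a Borel isomorphism I may fix a Borel-indexed family of trees $U_{x,y} \subseteq {}^{<\omega}\omega$ on $\omega$ such that $x\, R\, y$ if and only if $U_{x,y}$ has an infinite branch. This reduces the problem to a coding one: for each $x \in 2^\omega$ I must encode $x$ rigidly (so distinct $x$ produce non-isomorphic trees) and also encode the family $(U_{x,y})_y$ so that embeddings into $T_y$ read off branches through $U_{x,y}$. Each $T_x$ is built as the union, joined at a common root $r_x$, of an \emph{identifier} subtree $I_x$ and a family of \emph{witness} subtrees $\{W_{x,y} : y \in 2^\omega\}$. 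The identifier $I_x$ is a rigid combinatorial pattern (pendant paths of lengths depending injectively on the pairs $(n,x(n))$) chosen so that $I_x \cong I_{x'}$ forces $x = x'$ and $I_x$ has no nontrivial automorphism. Each $W_{x,y}$ is built directly from $U_{x,y}$ so that an embedding $T_x \hookrightarrow T_y$ is forced to exhibit an infinite branch through $U_{x,y}$, and conversely any such branch unfolds into such an embedding. The strict well-founded order is imposed only on a designated $\omega$-sequence of landmark nodes attached to the root: this pins down where the identifier sits and how the witness subtrees are indexed, and both irreflexivity and well-foundedness are automatic.

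The verification then splits into three steps. Rigidity of $I_x$ ensures that $x \mapsto T_x$, together with the decoding map $T \mapsto \textrm{(identifier of }T\textrm{)}$, witnesses ${\cong_\C} \simeq_{cB} {\id(\RR)}$. The witness subtrees yield $T_x \sqsubseteq T_y \iff x\, R\, y$, hence ${\sqsubseteq_\C} \sim_B R$. Combining these gives $E_R \simeq_{cB} {\equiv_\C}$ via the same pair of Borel maps. Finally, $\C$ is taken to be the isomorphism-saturation of $\{T_x : x \in 2^\omega\}$ inside $\Mod(\L)$, and one checks that this set is Borel and invariant under isomorphism, hence $\Linf$-axiomatizable by Lopez-Escobar.

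The main obstacle is reconciling rigidity of $T_x$ with the demand that $T_x$ embed into $T_y$ whenever $x\, R\, y$: every such embedding must place a copy of $I_x$ somewhere in $T_y$, so $T_y$ must contain copies of $I_x$ for every $R$-predecessor $x$ of $y$, yet these copies must not corrupt the identifier $I_y$ of $y$ itself. The remedy is to use the strict well-founded order to single out the ``official'' position of $I_y$, while the extra copies of identifiers are hidden inside the witness subtrees $W_{x,y}$ and marked by order in a qualitatively different way: order-distinguished positions make $T_y$ rigidly encode $y$ alone, while the buried copies freely serve as landing sites for embeddings from $R$-predecessors. Making this bookkeeping precise, keeping $x \mapsto T_x$ Borel, and verifying that the resulting class $\C$ is $\Linf$-axiomatizable with the right embeddability structure, is where the real technical work lies.
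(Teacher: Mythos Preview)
This theorem is not proved in the present paper; it is quoted from \cite{friedmanmottoros} (Theorems~3.3 and~3.5 there), so there is no in-paper proof to compare against. Your outline does follow the general strategy of that source (itself refining Louveau--Rosendal \cite{louros}): put $R$ into a tree-normal form, attach to each $x$ a rigid identifier together with combinatorial data from which embeddings recover witnesses to $x\,R\,y$, and use the extra order relation to pin down the ``official'' identifier against buried copies.

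There is, however, a genuine gap in your description. You propose that $T_x$ contain a family of witness subtrees $\{W_{x,y} : y \in 2^\omega\}$, one for each potential target $y$. Since $2^\omega$ is uncountable this would make $T_x$ uncountable, which is impossible for a structure in $\Mod(\L)$. The actual construction in \cite{friedmanmottoros} (and already in \cite{louros}) does not index witness data by $y$. Instead one fixes a single normal form $x\,R\,y \iff \exists \alpha \in \omega^\omega\ \forall n\ (x \restriction n, y \restriction n, \alpha \restriction n) \in S$ for a fixed tree $S$, and builds $T_x$ so that its nodes are essentially finite sequences $s$ together with decorations recording how $(x \restriction |s|, s)$ sits relative to $S$. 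An embedding $T_x \hookrightarrow T_y$ is then forced, level by level, to produce a pair $(y,\alpha)$ with $(x,y,\alpha)$ a branch through $S$; conversely any such branch unfolds into an embedding. The point is that the witness data for \emph{all} possible $y$ is packed into a single countable object via finite approximations, not split into uncountably many disjoint subtrees. Your diagnosis of the rigidity-versus-embeddability tension and of the role of the strict well-founded order is on target, but the encoding you sketch must be reorganized along these lines before it can produce countable structures.
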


\begin{theorem}\label{theorRcompatible}
Let $E$ be a quasi-isomorphism relation on the standard Borel space $X$ such that $\id(\RR) \leq_B E$, and $R$ be
an arbitrary analytic quasi-order on the standard Borel space $Y$. Then there is an
$\Linf$-elementary class $\C$ such that $E \sim_B
{\cong_\C}$ and $R \sim_B {\sqsubseteq_\C}$ (in fact, if $E$ itself is an isomorphism
 relation then $E \simeq_{cB} {\cong_\C}$ and $E_R \simeq_{cB} {\equiv_\C}$).
\end{theorem}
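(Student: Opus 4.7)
My plan is to combine the constructions of Theorems \ref{theorfrimot} and \ref{theorequivsimple} into a single $\Linf$-elementary class $\C$, each model of which carries two structurally separated ``layers'': an $R$-layer borrowed from the class of Theorem \ref{theorfrimot} and an $E$-layer borrowed from the class of Theorem \ref{theorequivsimple}. The two layers will be canonically distinguishable inside every model by whether the order relation is irreflexive or reflexive on a given vertex, and this separation will force every isomorphism and every embedding of combined models to decompose into its two components, at which point the desired equivalences fall out automatically from the properties of the ingredient classes.

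Concretely, I first apply Theorem \ref{theorfrimot} to obtain an $\Linf$-elementary class $\C_R$ of ordered combinatorial trees with strict well-founded order satisfying ${\sqsubseteq_{\C_R}} \sim_B R$, ${\cong_{\C_R}} \simeq_{cB} \id(\RR)$, and $E_R \simeq_{cB} {\equiv_{\C_R}}$. Since $\id(\RR) \leq_B E$, Proposition \ref{propessentiallyrefines} yields an analytic equivalence relation $E' \supseteq E$ with $E' \simeq_{cB} \id(\RR) \simeq_{cB} {\cong_{\C_R}}$, so by composition there is a Borel $g \colon X \to \C_R$ witnessing $E' \simeq_{cB} {\cong_{\C_R}}$. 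Next I apply Theorem \ref{theorequivsimple} to $E$ to obtain $\C_E$ consisting of ordered set-theoretical trees whose order is a reflexive equivalence relation, with ${\cong_{\C_E}} \sim_B E$ (classwise Borel isomorphic when $E$ is itself an isomorphism relation) and ${\sqsubseteq_{\C_E}} = {\equiv_{\C_E}} \sim_B \id(1)$, together with a Borel reduction $h \colon X \to \C_E$. For each $x \in X$ I define $F_x \in \Mod(\L)$ by placing a copy of $g(x)$ on the even natural numbers and a copy of $h(x)$ on the odd natural numbers, with no graph or order pairs linking the two halves.

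Every vertex in the $g$-half of $F_x$ satisfies $v \not< v$ while every vertex in the $h$-half satisfies $v < v$ (where $<$ denotes the order), and embeddings of $\L$-structures preserve both $<$ and $\not<$, so any embedding $F_x \hookrightarrow F_y$ must send $g$-half into $g$-half and $h$-half into $h$-half. Consequently $F_x \cong F_y \Leftrightarrow g(x) \cong g(y) \wedge h(x) \cong h(y) \Leftrightarrow x\, E'\, y \wedge x\, E\, y \Leftrightarrow x\, E\, y$ (using $E \subseteq E'$), and $F_x \sqsubseteq F_y \Leftrightarrow g(x) \sqsubseteq g(y)$ because the $h$-halves always embed mutually by triviality of ${\sqsubseteq_{\C_E}}$. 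Letting $\C$ be the isomorphism-saturation of $\{F_x \mid x \in X\}$ in $\Mod(\L)$, the map $x \mapsto F_x$ reduces $E$ to ${\cong_\C}$ and $R$ to ${\sqsubseteq_\C}$; conversely, the Borel decomposition $M \mapsto (M_g, M_h)$ (where $M_g$ is the substructure on $\{v \in M : v \not< v\}$ and $M_h$ on $\{v \in M : v < v\}$) post-composed with the inverses of $h$ and of the reduction ${\sqsubseteq_{\C_R}} \leq_B R$ supplies the opposite reductions. When $E$ is itself an isomorphism relation, splicing the classwise Borel isomorphism witnesses of $h$ and of ${\cong_{\C_R}} \simeq_{cB} \id(\RR)$ upgrades the above to ${\cong_\C} \simeq_{cB} E$ and ${\equiv_\C} \simeq_{cB} E_R$.

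The main technical obstacle will be verifying that $\C$ as defined really is a Borel subset of $\Mod(\L)$ (equivalently, $\Linf$-elementary by Lopez-Escobar), and simultaneously that it contains no ``spurious'' models whose $g$- and $h$-halves come from inconsistent choices of $x$ (which could otherwise inflate $\cong_\C$ beyond $E$). To handle both issues at once I would mimic the argument of Theorem \ref{theorequivsimple}: produce a Borel $B \subseteq S_\infty$ so that the map $X \times B \to \Mod(\L)$ sending $(x,b)$ to $j_\L(b, F_x)$ is injective with range precisely the saturation of $\{F_x \mid x \in X\}$, whence this range is Borel by Lusin's theorem on injective Borel images and automatically contains only models isomorphic to some $F_x$.
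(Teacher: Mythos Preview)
Your overall strategy is precisely the paper's: combine the classes of Theorems~\ref{theorfrimot} and~\ref{theorequivsimple} via a disjoint sum $\oplus$, exploit the irreflexive-versus-reflexive order relation to canonically separate the two halves inside every model, and use a coarsening $E' \simeq_{cB} \id(\RR)$ to tie the halves together. The gap is in your last paragraph, where you correctly identify the crux but propose a solution that does not work.

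The parametrization $X \times B \to \Mod(\L)$, $(x,b) \mapsto j_\L(b,F_x)$, cannot have range equal to the saturation of $\{F_x\}$. If $M \cong F_{x_0}$, then decomposing the witnessing permutation gives $M = j_\L(b, u \oplus v)$ for some $b$ in an appropriate parity-respecting subset of $S_\infty$ and some $u \in \C_R$, $v \in \C_E$ with $u \cong g(x_0)$, $v \cong h(x_0)$; but there is no reason $u$ should lie in the (merely analytic, non-invariant) range of $g$, so one cannot in general write $u \oplus v = F_{x'}$ for any $x' \in X$. The argument from Theorem~\ref{theorequivsimple} you want to mimic worked only because $x \mapsto T_x$ there has a rigid shape independent of $x$; your $g,h$ are arbitrary Borel reductions and carry no such rigidity. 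The paper's fix is to parametrize not over $X$ but over a Borel subset $W$ of $\C_R \times \C_E \times G$ (with $G$ a closed subgroup of $S_\infty$), allowing the \emph{full} invariant classes $\C_R, \C_E$ in the first two coordinates subject to the compatibility condition that the $\C_R$-coordinate be isomorphic to the image of the $\C_E$-coordinate under the composite $\C_E \to \RR \to \C_R$ of the $E'$-quotient with the classwise inverse $\id(\RR) \simeq_{cB} {\cong_{\C_R}}$. This condition is Borel because $\cong_{\C_R}$ is, and since it depends only on isomorphism types the range of $(a,z,b) \mapsto j_\L(b, a \oplus z)$ on $W$ is automatically saturated.

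Two smaller points. Proposition~\ref{propessentiallyrefines} applies only to orbit equivalence relations, which a quasi-isomorphism relation $E$ need not be; the paper applies it to $\cong_{\C_E}$ instead, obtaining $E' \supseteq {\cong_{\C_E}}$ on $\C_E$. And $x \mapsto F_x$ does not reduce $R$ to $\sqsubseteq_\C$: your map $g$ was chosen to reduce $E'$ to $\cong_{\C_R}$ and has nothing to do with $R$, so $F_x \sqsubseteq F_y \iff g(x) \sqsubseteq_{\C_R} g(y)$ defines a quasi-order Borel bireducible with $R$ (via $g$ and its classwise inverse) but not $R$ itself --- harmless for the conclusion, but incorrect as written.
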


\begin{proof}
 We can assume $X = Y =  \RR$. Let $\C'$ be given by applying Theorem
 \ref{theorfrimot} to $R$, so that ${\sqsubseteq_{\C'}} \sim_B R$ and
 ${\cong_{\C'}} \simeq_{cB} \id(\RR)$, and let $\fhi_0 \colon \C' \to \RR$ and $\psi_0 \colon \RR \to \C'$
 witness the 
classwise Borel isomorphism. Then apply Theorem \ref{theorequivsimple} to
 $E$ to get an $\Linf$-elementary class $\C''$ such that
 $ {\cong_{\C''}} \sim_B E$ and ${\equiv_{\C''}} \sim_B
 \id(1)$. Since $\id(\RR) \leq_B E$, $\cong_{\C''}$
 essentially refines $\id(\RR)$ by Proposition \ref{propessentiallyrefines}, so let $E'
 \supseteq {\cong_{\C''}}$ be a Borel equivalence
 relation on $\C''$ which is classwise Borel isomorphic to $\id(\RR)$, and let 
 $\fhi_1 \colon \C'' \to \RR$ and $\psi_1 \colon \RR \to \C''$
 be witnesses 
 of this fact. Notice that $\psi_1 \circ \fhi_0 \colon \C' \to \C''$
 and $\psi_0 \circ \fhi_1 \colon \C'' \to \C'$ witness ${\cong_{\C'}}
 \simeq_{cB} E'$.
Now consider the set 
\[ W
 = \{ (x,z,g) \in \C' \times \C'' \times G
 \mid \psi_0(\fhi_1(z)) \cong_{\C'} x \},\]
 where $G$ is the closed subset of $S_\infty$ consisting of those $g$
 such that for all $n,m \in \omega$, if $n,m$ have the same parity then
 $n \leq m \iff g(n) \leq g(m)$. Notice that $W$ is Borel because
 $\cong_{\C'}$ is a Borel equivalence relation, and define 
 $S$ and $F$ 
 on $W$  by 
\[ (x_1, z_1, g_1) \, S \, (x_2, z_2,  g_2) \iff x_1 \sqsubseteq x_2 \iff x_1 \sqsubseteq_{\C'} x_2, \]
\[ (x_1, z_1, g_1) \, F \, (x_2, z_2, g_2) \iff z_1 \cong z_2 \iff z_1 \cong_{\C''} z_2. \]

Obviously, the projections on the first and on the second coordinate witness, respectively, $S \leq_B {\sqsubseteq_{\C'}}$ and $F \leq_B {\cong_{\C''}}$. Moreover, the Borel map sending $z \in \C''$ to
$(\psi_0(\fhi_1(z)),z,\id)$ (which is an element of $W$ by definition)
witnesses ${\cong_{\C''}} \leq_B F$. Consider now the Borel map $h$ sending $x \in \C'$ to
$(x,\psi_1(\fhi_0(x)),\id)$: since $\psi_0(\fhi_1(\psi_1(\fhi_0(x))))
= \psi_0(\fhi_0(x)) \cong_{\C'} x$, we have that $h(x) \in W$, and obviously
$h$ reduces $\sqsubseteq_{\C'}$ to $S$. Therefore we get $S \sim_B {\sqsubseteq_{\C'}}$ ($\sim_B R$) and $F \sim_B {\cong_{\C''}}$ ($\sim_B E$), and hence it will be enough to find an $\L_{\omega_1
  \omega}$-elementary 
class $\C$ such that ${\sqsubseteq_\C} \sim_B S$ and ${\cong_\C}
\sim_B F$. 
Define the Borel map $f$ from $W$ into the space of $\L$-structures 
on $\omega$ by sending $w = (x,z,g)$ into $j_\L(g, x \oplus
z)$, where 
$x \oplus z$ is the structure on $\omega$
obtained by ``copying'' in the obvious way $x$ on the even
numbers and $z$ on the
odd numbers. Let
$w_1 = (x_1, z_1, g_1)$ and $w_2 = (x_2,z_2,g_2)$
denote arbitrary elements of $W$.

\begin{claim}
 $f$ reduces $S$ to $\sqsubseteq$ and $F$ to $\cong$.
\end{claim}

\begin{proof}[Proof of the Claim]
 Assume first that $w_1 \, S\, w_2$, that is $x_1 \sqsubseteq
 x_2$. Since $z_1 \sqsubseteq z_2$  by the
 choice of $\C''$, we can
 glue these two embeddings into an embedding of $x_1 \oplus
 z_1$ into $x_2 \oplus z_2$, whence
 $f(w_1) \sqsubseteq f(w_2)$. Conversely, if $f(w_1) \sqsubseteq
 f(w_2)$ then $x_1 \oplus z_1 \sqsubseteq x_2
 \oplus z_2$ as well. But any such embedding must send
 elements coming from $x_1$ into elements coming from
 $x_2$, as by Theorems \ref{theorfrimot} and \ref{theorequivsimple}
 these are the unique vertices of, respectively, $x_1 \oplus z_1$ and
 $x_2 \oplus z_2$ which 
 \emph{are not} in order relation with themselves. This
 implies $x_1 \sqsubseteq x_2$, whence $w_1\, S\, w_2$. 

Assume now $w_1\, F\, w_2$, so that $z_1 \cong z_2$. Since $E'
\supseteq {\cong_{\C''}}$ and $\fhi_1$ reduces $E'$ to
$\id(\RR)$, we have that $\fhi_1(z_1) = \fhi_1(z_2)$, so that $x_1 \cong 
\psi_0(\fhi_1(z_1)) = \psi_0(\fhi_1(z_2)) \cong x_2$. Therefore one
can glue these 
 isomorphisms to witness $x_1 \oplus z_1 \cong
x_2 \oplus z_2$, whence $f(w_1) \cong
f(w_2)$. Conversely, assume that $f(w_1) \cong f(w_2)$, so that in
particular $x_1 \oplus z_1 \cong x_2 \oplus
z_2$: since any isomorphism witnessing this fact must again
map elements coming from $z_1$ into elements coming from $z_2$ (as
these are the unique elements of the corresponding structure which \emph{are}
in order relation with themselves), from such an
isomorphism one can recover an isomorphism between $z_1$ and
$z_2$, whence $w_1 \, F \, w_2$. 
\renewcommand{\qedsymbol}{$\square$ \textit{Claim}} 
\end{proof}

\begin{claim}\label{claiminj}
 $f$ is injective and ${\rm range}(f)$ is saturated.
.
\end{claim}

\begin{proof}[Proof of the Claim]
Assume first that $f(w_1) = f(w_2)$, and observe that for every $h_1,h_2 \in
G$, if $ \{ h_1(2n+1) \mid n \in \omega\} = \{ h_2(2n+1) \mid n \in
\omega \}$ then $h_1 = h_2$. Since $k = g_i(2n+1) \iff k$ is in order relation with itself ($i=1,2$, $n,k \in \omega$) by Theorems \ref{theorfrimot} and
\ref{theorequivsimple} and the definition of $f$, from $f(w_1) = 
f(w_2)$ we get $ \{ g_1(2n+1) \mid n \in \omega\} = \{ g_2(2n+1) \mid
n \in \omega \}$, and hence we can conclude $g_1 = g_2$. But this
implies $x_1 \oplus z_1 = x_2 \oplus
z_2$, whence $x_1 =  x_2$ and $z_1 =
z_2$.  

For the second part, it is enough to show that ${\rm range}(f)$ is the saturation of $\{
\psi_0(\fhi_1(z)) \oplus z \mid z \in \C''  \}$. One direction is
obvious. For the other direction, note that for each $h \in S_\infty$
there are $g \in G$ and $p,q \in S_\infty$ such that $h(2n) =
g(2p(n))$ and $h(2n+1) = g(2q(n)+1)$ for every $n \in \omega$, so that
$j_\L(h, \psi_0(\fhi_1(z)) \oplus z) = j_\L(g, x' \oplus
z')$, where $x' = j_\L(p,\psi_0(\fhi_1(z))) \in \C'$ and $z' = j_\L(q,z) \in
\C''$. But since $z' \cong 
z$ and $E' \supseteq {\cong_{\C''}}$, we have $\fhi_1(z) =
\fhi_1(z')$, so that $x' \cong_{\C'} \psi_0(\fhi_1(z)) =
\psi_0(\fhi_1(z'))$. Therefore $(x',z',g) \in W$ and $j_\L(h, \psi_0(\fhi_1(z)) \oplus z)
= f(w)$, hence we are done.
\renewcommand{\qedsymbol}{$\square$ \textit{Claim}}
\end{proof}

Since $W = {\rm dom}(f)$ is Borel and $f$ is Borel and injective,
then ${\rm range}(f)$ is Borel and $f^{-1}$ is a Borel function 
reducing $\sqsubseteq$ to $S$ and $\cong$ to $F$. Since ${\rm
  range}(f)$ is also saturated then ${\rm range}(f) = \C$ for some
$\L_{\omega_1 \omega}$-elementary class $\C$, therefore we get the
desired result. 
\end{proof}

Using the technique developed in the previous proof,  we can now deal
with the case $E_R \leq_B \id(\omega),E$.

\begin{theorem}\label{theor'}
 For every $1 \leq n \leq \omega$, every analytic quasi-order $R$ such that
 $E_R \sim_B {\id(n)}$, and every  quasi-isomorphism
 relation $E$ such that ${\id(n)} \leq_B E$, there is an
 $\L_{\omega_1 
   \omega}$-elementary class $\C$ such that $E \sim_B {\cong_\C}$ and
 $R \sim_B {\sqsubseteq_\C}$ (in fact, if $E$ itself is an isomorphism
 relation then $E \simeq_{cB} {\cong_\C}$ and $E_R \simeq_{cB} {\equiv_\C}$). 
\end{theorem}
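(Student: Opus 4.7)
The plan is to follow the proof of Theorem~\ref{theorRcompatible} almost verbatim, with $\id(\RR)$ replaced throughout by $\id(n)$ and with the application of Theorem~\ref{theorfrimot} replaced by a countable analogue. The first step is to establish this analogue: for every analytic quasi-order $R$ with $E_R \sim_B \id(n)$, there is an $\Linf$-elementary class $\C'$ of ordered combinatorial trees with strict order such that ${\sqsubseteq_{\C'}} \sim_B R$, ${\cong_{\C'}} \simeq_{cB} \id(n)$, and ${\equiv_{\C'}} \simeq_{cB} E_R$. Since $E_R \sim_B \id(n)$, its equivalence classes are Borel, and a Borel selector $r \colon \dom(R) \to n$ is available; I would then pick representatives $a_i$ ($i<n$) of the $E_R$-classes and realize the countable partial order $R/E_R$ as the embeddability order on $n$ pairwise non-isomorphic countable ordered combinatorial trees $U_i$ by attaching to each $U_i$ pairwise non-embeddable rigid ``tags'' coding the principal down-set of the $i$-th class. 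Letting $\C'$ be the saturation under isomorphism of $\{U_i : i<n\}$, one has that $\cong_{\C'}$ has $n$ Borel classes (orbits of the $S_\infty$-action are Borel), and the Borel maps $x \mapsto U_{r(x)}$ and $U \mapsto a_i$ (whenever $U \cong U_i$) supply the required reductions in both directions.

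Next I apply Theorem~\ref{theorequivsimple} to $E$ to obtain an $\Linf$-elementary class $\C''$ of ordered set-theoretical trees with reflexive order such that ${\cong_{\C''}} \sim_B E$ and ${\sqsubseteq_{\C''}} = {\equiv_{\C''}} \sim_B \id(1)$. Since $\id(n) \leq_B E \sim_B {\cong_{\C''}}$, I pick pairwise non-isomorphic $B_i \in \C''$ ($i<n$) and form a Borel equivalence relation $E' \supseteq {\cong_{\C''}}$ whose classes are $[B_i]_{\cong_{\C''}}$ for $i<n-1$ (or for all $i<\omega$ if $n=\omega$) together with the Borel complement $\C'' \setminus \bigcup_i [B_i]_{\cong_{\C''}}$. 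Since the isomorphism class of any single countable structure is an orbit of $S_\infty$ and hence Borel, $E'$ is a Borel equivalence relation with exactly $n$ classes, so $E' \simeq_{cB} \id(n)$; this plays the role that in Theorem~\ref{theorRcompatible} was played by the $E'$ obtained via Proposition~\ref{propessentiallyrefines}.

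With witnesses $\fhi_0 \colon \C' \to n$, $\psi_0 \colon n \to \C'$ of ${\cong_{\C'}} \simeq_{cB} \id(n)$ and $\fhi_1 \colon \C'' \to n$, $\psi_1 \colon n \to \C''$ of $E' \simeq_{cB} \id(n)$ in hand, I would copy verbatim the construction of $W = \{(x,z,g) \in \C' \times \C'' \times G \mid \psi_0(\fhi_1(z)) \cong_{\C'} x\}$, of the relations $S$ and $F$ on $W$, and of the Borel map $f \colon W \to \Mod(\L)$ defined by $f(x,z,g) = j_\L(g, x \oplus z)$ from the proof of Theorem~\ref{theorRcompatible}. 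The three claims (that $f$ reduces $S$ to $\sqsubseteq$ and $F$ to $\cong$, that it is injective, and that its range is saturated) go through by the same arguments, the key feature being the contrast between the strict order in $\C'$ and the reflexive order in $\C''$, which lets one recognize which vertices of $x \oplus z$ come from $\C'$ and which from $\C''$. The main obstacle is the first step, namely the combinatorial realization of $R/E_R$ by the embeddability order on the trees $U_i$: this is handled by indexing the tags by the principal down-sets of $R/E_R$, so that $U_i \sqsubseteq U_j$ iff every tag of $U_i$ occurs in $U_j$ iff the down-set of $i$ is included in that of $j$ iff $a_i\, R\, a_j$, with antisymmetry of the quotient partial order ensuring that distinct $U_i$ are non-isomorphic.
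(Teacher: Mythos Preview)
Your approach is correct and would go through, but it differs from the paper's in a way worth noting. You invest effort in a \emph{countable analogue} of Theorem~\ref{theorfrimot}, building by hand an $\Linf$-elementary class $\C'$ with exactly $n$ isomorphism classes whose embeddability realizes the finite-or-countable partial order $R/E_R$. The paper does not do this. Instead it applies Theorem~\ref{theorfrimot} \emph{as stated} to $R$, obtaining a $\C'$ with ${\cong_{\C'}} \simeq_{cB} \id(\RR)$ but ${\equiv_{\C'}} \sim_B E_R \sim_B \id(n)$; it then simply picks representatives $x_1,\dotsc,x_n\in\C'$, one from each $\equiv_{\C'}$-class, and matches them with $n$ chosen $\cong_{\C''}$-classes (the $z_i$'s) by writing $W$ explicitly as
\[
  W=\bigl\{(x,z,g)\in\C'\times\C''\times G\;\bigm|\;(z\in\hat{\C}''\wedge x\cong_{\C'} x_n)\vee\textstyle\bigvee_{1\le i<n}(z\cong z_i\wedge x\cong_{\C'} x_i)\bigr\}.
\]
This is in effect your coupling $\psi_0\circ\fhi_1$ made explicit, but without the preliminary step of trimming $\C'$ down to $n$ isomorphism types. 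The paper's route is more economical: it reuses Theorem~\ref{theorfrimot} verbatim and only needs to observe that the $\equiv_{\C'}$-classes are Borel (there being $n$ of them), so the displayed $W$ is Borel. Your route, by contrast, requires you to actually carry out the tag construction for arbitrary countable partial orders and to verify that the resulting trees are ordered combinatorial trees with irreflexive order, that the tags are genuinely pairwise non-embeddable, and that no embedding can send a tag into the ambient tree---all doable, but additional work that the paper's argument sidesteps entirely.
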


\begin{proof}
The case $n =1$ is Theorem \ref{theorequivsimple}, thus we will consider
just the case
$1 < n \leq \omega$. Apply Theorem \ref{theorfrimot} and let $\C'$ be an
$\Linf$-elementary class such that $R \sim_B
{\sqsubseteq_{\C'}}$ (and ${\cong_{\C'}} \simeq_{cB} \id(\RR)$, so that
$\cong_{\C'}$ is Borel). Moreover, let $\C''$  be an
$\Linf$-elementary class such that $E \sim_B 
{\cong_{\C''}}$ and $\equiv_{\C''} \sim_B {\id(1)}$ (such a $\C''$ exists by
Theorem \ref{theorequivsimple}). Choose pairwise non-isomorphic $z_1,
\dotsc, z_n \in \C''$ (this is possible because ${\id(n)} \leq_B E$): then
$\hat{\C}'' = \C'' \setminus \bigcup \{ [z_i]_{\cong} \mid 1 \leq i 
< n  \}$ is a nonempty Borel invariant set, so it is an
$\Linf$-elementary class. Now choose
$x_1, \dotsc, x_n \in \C'$ to be pairwise
$\equiv_{\C'}$-inequivalent and such that every other $x \in \C'$ is
$\equiv_{\C'}$-equivalent to some of these $x_i$'s (this is possible
since ${\equiv_{\C'}} \sim_B E_R \sim_B {\id(n)}$). Now put 
\[
  W = \{ (x,z,g) \in \C' \times \C'' \times G \mid  (z \in \hat{\C}'' \wedge x \cong_{\C'} x_n) \vee \bigvee\nolimits_{1 \le i < n} ({z \cong z_i} \wedge {x \cong_{\C'} x_i}) \},
\]
where $G$ is defined as in the proof of Theorem
\ref{theorRcompatible}. Letting now $S$, $F$ and $f$ be defined as in that
proof, by (almost) the same argument one gets that $S \sim_B
{\sqsubseteq_{\C'}} \sim_B R$, $F \sim_B {\cong_{\C''}} \sim_B E$, $f$ reduces $S$ to $\sqsubseteq$ and $E$ to $\cong$ (this
is essentially because if $(x,z,g),(x',z',g') \in W$ and $z \cong z'$
then $x \cong x'$), $f$ is injective and ${\rm range}(f)$ is saturated, so
that taking $\C = {\rm range}(f)$ we have the result. 
\end{proof}

We now want to make some comments on possible variations of our main
result. First of all, notice that in both Theorem
\ref{theorRcompatible} and Theorem \ref{theor'} we can also have that
the resulting $\C$ consists of ordered
combinatorial trees. In fact, given $x \in C'$ (where $\C'$ is as in
the proof of Theorem \ref{theorRcompatible}) call \emph{root} of $x$
the least vertex with respect to the (strict well-founded) order
relation on $x$. By inspecting the proof of Theorem \ref{theorfrimot}, it is
easy to check that any embedding (and consequently any isomorphism)
between $x_1,x_2 \in \C'$ must send the 
root of $x_1$ to the root of $x_2$. Moreover, by applying \cite[Theorem
4.1]{friedmanmottoros} to the $\Linf$-elementary class $\C''$ defined
in the proof of Theorem \ref{theorRcompatible}, we get that such
$\C''$ can be assumed to consist of
ordered combinatorial trees with the further property that for each $z
\in \C''$ there is a unique element which is in order relation just
with itself (such element is called \emph{root} of $z$), and that for
every $z_1,z_2 \in \C''$ there is an embedding between them which
sends the root of $z_1$ to the root of $z_2$ (this easily follows from the construction given in \cite[Theorem 4.1]{friedmanmottoros}). Define the Borel set $W$
as above and redefine $f$ to be the Borel function sending  $(x,z,g)
\in W$ to $j_\L(g, x \, \hat{\oplus} \, z)$, where $x 
\, \hat{\oplus}\, z$ is the \emph{ordered combinatorial tree} obtained by
first considering $x \oplus z$ and then linking the root of $x$ to the
root of $z$. It is now straightforward to check that the
proofs of Theorems \ref{theorRcompatible} and \ref{theor'} can be carried out with this new definition of $f$ (by the
properties of $\C'$ and $\C''$ described above).

The second possible variation is given by the fact that we can replace
embeddings with 
homomorphisms and weak-homomorphisms\footnote{A function $f$
between (the domains of) two structures $x,y$ with the same language is said \emph{homomorphism} if it preserves
relations and functions in both directions, and
\emph{weak-homomorphism} if it preservers relations and functions just
from the domain structure to the range structure. In particular,
embeddings coincide with \emph{injective} homomorphisms.}.  This is because
in \cite[Theorem 
3.5]{friedmanmottoros} 
it is proved that  any weak-homomorphism between two elements of the
$\Linf$-elementary class $\C$ given by Theorem \ref{theorfrimot}
 is
automatically an embedding. This fact, together with the trivial
observation that each embedding is, in particular, a
(weak-)homomorphism, shows that the proofs of Theorems
\ref{theorequivsimple}, \ref{theorRcompatible} and \ref{theor'} are
also proofs of 
the analogous results obtained by replacing $\sqsubseteq$ with the analytic quasi-order naturally induced by (weak-)homomorphism.

Finally, in the case of embeddings and homomorphisms we can further replace
the language of ordered graphs $\L$ (which consists of two binary
relation symbols) with the language of graphs $\hat{\L}$, and obtain that the
$\C$ resulting from any of the theorems of this section consists of
graphs. This is because, as already noticed in the introduction,  in
\cite[Theorem 5.5.1]{hodges} it is shown that the
$\Linf$-elementary class $\C$ can be bi-interpreted in an $\hat{\L}_{\omega_1
  \omega}$-elementary class $\hat{\C}$ consisting of connected graphs, and a careful
inspection of the proof shows that both interpretations preserve
the embeddability and the homomorphism relation (in fact one can 
show that for graphs in $\hat{\C}$ each homomorphism
is automatically an embedding). The case of weak-homomorphisms seems
more difficult, as we do not even know if a statement analogous to
Theorem \ref{theorfrimot} holds when replacing the language $\L$
with $\hat{\L}$.

We end this section with a question about the unique possibility left open by the limitations discussed in the introduction and
Theorems \ref{theorRcompatible} and \ref{theor'}.

\begin{question}
What if Vaught's Conjecture is false, $E \sim_B {\cong_\C}$ for some
$\C$ witnessing this failure, and $F$ is such that ${\id(\omega)} <_B
F$ but ${\id(\RR)}
\nleq_B F$ (or, similarly, if $R$ is an analytic quasi-order such that
${\id(\omega)} <_B E_R$ but ${\id(\RR)} \nleq_B E_R$)? 
\end{question}

Notice that an answer to this question must necessarily employ different techniques, because a careful inspection of the proofs of Theorems \ref{theorRcompatible} and 
\ref{theor'} shows that such kinds of
argument can in general be applied to those pairs $(E,R)$ for which there is an analytic equivalence relation $F \supseteq {\cong_\E}$ (where $\E$ is some
$\hat{\L}_{\omega_1 \omega}$-elementary class such that $E \sim_B {\cong_\E}$) which is Borel
isomorphic to a refinement of $E_R$ and has a Borel 
selector. But the Borel transversal given by such a selector must
 either be countable or contain a perfect subset, and hence $(E,R)$
 must satisfy the hypotheses of Theorem 
 \ref{theor'} in the former case, and of Theorem
 \ref{theorRcompatible} in the other one.

\section{Replacing embeddings with epimorphisms}\label{sectionepi}

Given two countable structures $x,y$, call a function between their
domains \emph{(weak-)epimorphism} if it is a surjective
(weak-)homomorphism, and put $x \preceq^{(w)epi} y$ if and only if
there is a (weak-)epimorphism of $y$ onto $x$ (as usual, we will denote by $\preceq^{(w)epi}_\C$ the
restriction of $\preceq^{(w)epi}$ to the $\Linf$-elementary class $\C$).
It is easy to check that any quasi-order of the form
 $\preceq^{(w)epi}_\C$ is an
analytic quasi-order. We will now prove that given a pair $(E,R)$ as in the
previous section (that is such that either $\id(\RR) \leq_B E$ or $E_R
\leq_B \id(\omega),E$), it is always possible
to produce an $\hat{\L}_{\omega_1 \omega}$-elementary class $\hat{\C}$ such that $E
\sim_B {\cong_{\hat{\C}}}$ and $R \sim_B {\preceq^{wepi}_{\hat{\C}}}$ (this implies the analogous statement for $\Linf$-elementary classes, since it is enough to adjoin the empty order relation to the elements of $\hat{\C}$). We do not know if a similar result holds for the quasi-order $\preceq^{epi}$: in fact, it is still an open problem whether this relation is complete for analytic quasi-orders.

First we have to consider the basic case (which is
analogous to Theorem \ref{theorequivsimple}).

\begin{theorem}\label{theorequivsimpleepi}
 Let $E$ be a quasi-isomorphism
 relation. Then there is an $\Linf$-elementary class
 $\C$ such that $E \sim_B {\cong_\C}$ and
 $z_1 \preceq^{wepi} z_2$ for every $z_1, z_2 \in \C$ (in fact, if $E$ itself is an isomorphism relation then $E \simeq_{cB} {\cong_\C}$).
\end{theorem}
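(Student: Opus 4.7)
The plan is to adapt the construction of Theorem~\ref{theorequivsimple}, replacing the uniform bi-embeddability property with a uniform weak-epimorphism property. The guiding idea is to make every structure $T_x$ \emph{uniformly collapsible}: equip every vertex with a self-loop and adjoin an infinite fixed ``dust'' of otherwise isolated vertices, so that forward preservation of the edge relation becomes automatic under arbitrary set-theoretic collapses.

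Fix an $\hat{\L}_{\omega_1\omega}$-elementary class $\hat{\C}$ of connected graphs (each on at least two vertices, with no isolated vertex) such that ${\cong_{\hat{\C}}} \sim_B E$; this is possible by \cite[Theorem 5.5.1]{hodges} after routine adjustments. For each $x \in \hat{\C}$, construct in a Borel-in-$x$ way the $\L$-structure $T_x$ on $\omega$ by splitting $\omega = V_0 \sqcup V_1$ into two infinite pieces, placing an isomorphic copy of $x$ on $V_0$ (the ``core'' $T_x^0$), leaving $V_1$ edgeless (the ``dust'' $D$), and adding a self-loop at every vertex of $\omega$; the order relation (the second symbol of $\L$) is taken to be empty. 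One then verifies the two properties (i) $T_x \cong T_y \iff x \cong y$, and (ii) for every $x, y \in \hat{\C}$ there is a weak-epimorphism $T_y \to T_x$. For (i), the set of vertices whose only incident edge is a self-loop is isomorphism-invariant and coincides with $D$ in every $T_z$, so any isomorphism $T_x \to T_y$ restricts to a core isomorphism, i.e.\ (after forgetting self-loops) to an isomorphism $x \to y$. For (ii), pick $v_0 \in T_x^0$, send every vertex of $T_y^0$ to $v_0$, and map the dust of $T_y$ surjectively onto $T_x \setminus \{v_0\}$ (possible since that dust is infinite): every edge of $T_y^0$ collapses to the self-loop at $v_0$, every dust edge is a self-loop mapped to a self-loop, and surjectivity is built in.

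Once the Borel family $\{T_x\}_{x \in \hat{\C}}$ with these two properties is in hand, the saturation argument from the proof of Theorem~\ref{theorequivsimple} --- via the logic action of $S_\infty$ on $\Mod(\L)$ and a suitable Borel set $B \subseteq S_\infty$ --- produces the required $\L_{\omega_1\omega}$-elementary class $\C$ whose members are exactly the isomorphic copies of the $T_x$'s. Property (i) transfers to ${\cong_\C} \sim_B {\cong_{\hat{\C}}} \sim_B E$ (and in fact to ${\cong_\C} \simeq_{cB} E$ when $E$ is itself an isomorphism relation, by the same strengthening used in Theorem~\ref{theorequivsimple}), while (ii) yields $z_1 \preceq^{wepi} z_2$ for every $z_1, z_2 \in \C$.

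The main obstacle is the simultaneous preservation of (i) and (ii): distinguishing isomorphism types requires genuine structure in the cores, while admitting surjective weak-homomorphisms in both directions demands considerable flexibility. The self-loop-plus-dust trick balances the two, provided one rules out the degenerate cases of isolated vertices or of one-vertex structures in $\hat{\C}$, which is a harmless restriction absorbed without loss of generality.
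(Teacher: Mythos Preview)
Your argument is correct and noticeably simpler than the paper's. The paper starts instead from a class $\hat{\C}$ that already enjoys uniform bi-embeddability (obtained from Theorem~\ref{theorequivsimple}), and builds for each $x\in\hat{\C}$ an ordered set-theoretical tree $R_x$ on ${}^{<\omega}\omega\sqcup\omega$ by a modification of the Friedman--Stanley coding: sequences that hit~$0$ receive two terminal successors, sequences avoiding~$0$ receive at most one (depending on whether their relevant pair is an edge of~$x$), and the equivalence relation $E_x$ is superimposed as before. The weak-epimorphism from $R_y$ onto $R_x$ is then produced by lifting an embedding $f\colon x\hookrightarrow y$ to a surjection of ${}^{<\omega}\omega$ onto itself via $0\mapsto 0$, $n+1\mapsto m+1$ if $f(m)=n$, and $n+1\mapsto 0$ otherwise.

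What your route buys is directness: you need neither the bi-embeddability of~$\hat{\C}$ nor the tree machinery, and the self-loop-plus-dust trick makes forward edge-preservation automatic. What the paper's route buys is a stronger structural conclusion that is exploited immediately afterwards. The weak-epimorphisms the paper constructs have the additional property that every edge in the target is the image of an edge in the source; this is exactly what is needed in the proof of Corollary~\ref{corequivsimpleepi} to push the result through the Hodges interpretation into \emph{connected} graphs, which in turn feeds into Theorem~\ref{theorEcompatibleepi}. Your collapse-to-a-point epimorphism lacks this edge-lifting property (a non-self-loop core edge between two vertices distinct from $v_0$ has no edge preimage), and your structures $T_x$ are disconnected, so your construction proves the theorem as stated but would not serve as a drop-in replacement for the subsequent corollary.
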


\begin{proof}
Let $\hat{\C}$ be an $\hat{\L}_{\omega_1 \omega}$-elementary class consisting of graphs
such that
${\cong_{\hat{\C}}} \sim_B E$ and $x_1 \sqsubseteq x_2$ for every $x_1,
x_2 \in \C'$ (such class exists by Theorem \ref{theorequivsimple} and the observations following Theorem \ref{theorRcompatible}). We partially modify the construction given in the proof of Theorem \ref{theorequivsimple}, and for $x \in \hat{\C}$ define a set-theoretical tree $\hat{R}_x$
on 
${}^{<\omega}\omega \sqcup \omega$ as follows: consider the tree ${}^{< \omega} \omega$ with the inclusion relation. If $s \in {}^{<\omega}\omega \setminus \{ \emptyset \}$
  is such that $s(i) = 0$ for some $i < |s|$ then adjoin to $s$ two distinct terminal successors taken from $\omega$, while if  $\emptyset \neq s \in {}^{<\omega}(\omega \setminus \{ 0 \})$ then adjoin to $s$ a unique terminal successor taken from $\omega$ just in case $n-1$ and $m-1$, where $\rp(s) = (n,m)$, are linked in the graph $x$ (as in the original argument, at the end of the above
  construction each element of $\omega$ must be the immediate successor of exactly one
  element of ${}^{< \omega} \omega$).
Now construct an ordered set-theoretical tree $R_x$ by adjoining to
$\hat{R}_x$ the equivalence relation $E_x$ defined in the proof of Theorem \ref{theorequivsimple}.

First check that ${x \cong y} \iff {R_x \cong R_y}$. For one
direction, if $f$ is an isomorphism between $x$ and $y$, first define
$f'(0) = 0$ and $f'(n+1) = f(n)+1$, lift $f'$ to an isomorphism of
${}^{<\omega} \omega$ into itself (which necessarily respect the
equivalence relations 
$E_x$ and $E_y$), and then extend such isomorphism in the obvious way
to an 
isomorphism of $R_x$ and $R_y$. For the other direction, given an
isomorphism $g$ between $R_x$ and $R_y$ recover by a back and forth
argument an isomorphism between $x$ and $y$ --- it is enough to use
the fact  that a sequence $s \in {}^{< \omega}(\omega \setminus \{
0 \})$ 
must be sent into an element of ${}^{< \omega}(\omega \setminus \{ 0
\})$ because such an $s$ can have at most one terminal successor, while every sequence which contains a $0$ has two distinct 
terminal successors. 

Then argue as in the proof of Theorem \ref{theorequivsimple} to
show that the saturation under isomorphism of $\{ R_x \mid x
\in \hat{\C} \}$ forms an $\Linf$-elementary class $\C$ such that ${\cong_\C} \sim_B {\cong_{\hat{\C}}}$,
so that it remains only to prove that for $z_1,z_2 \in \C$ one has $z_1
\preceq^{wepi} z_2$. Clearly it is enough to show that
for $x,y \in 
\hat{\C}$ there is a weak-epimorphism from $R_y$ onto $R_x$. Let $f$ be
an embedding from $x$ into $y$, and define $g$ by letting $g(0) = 0$,
$g(n+1) = m+1$ if $f(m) = n$, and $g(n+1) = 0$ otherwise. Now lift
coordinatewise the function $g$ to a surjection $\hat{g}$ from ${}^{<
  \omega} 
\omega$ onto itself. Note that if $s = \langle s_0, \dotsc s_n \rangle
\in {}^{< \omega}(\omega 
\setminus \{0\})$ has a terminal successor in $R_x$, then
the 
sequence $t = \langle f(s_0 - 1)+1, \dotsc f(s_n - 1)+1 \rangle$ has a
terminal successor  in $R_y$ and is such that $\hat{g}(t) = s$,
while if $s = \langle s_0, \dotsc, s_n \rangle$ is such that $s_i = 0$
for some $i  \leq n$  then the sequence $t = \langle t_0, \dotsc, t_n
\rangle$ defined by $t_i = 0$ if $s_i = 0$ and $t_i = f(s_i - 1) + 1$
otherwise is such that $\hat{g}(t) = s$ and both $s$ and $t$ have
exactly two distinct terminal successors in $R_x$ and $R_y$,
respectively. Therefore one can extend $\hat{g}$ in the obvious way to
a 
weak-epimorphism $h$ from $R_y$ onto $R_x$. 
\end{proof}

\begin{corollary}\label{corequivsimpleepi}
For every quasi-isomorphism
 relation $E$, there is an $\hat{\L}_{\omega_1 \omega}$-elementary class
 $\hat{\C}$ consisting of connected graphs such that $E \sim_B {\cong_{\hat{\C}}}$ and
 $z_1 \preceq^{wepi} z_2$ for every $z_1, z_2 \in \hat{\C}$ (in fact, if $E$ itself is an isomorphism relation then $E \simeq_{cB} {\cong_{\hat{\C}}}$).
\end{corollary}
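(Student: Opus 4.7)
The plan is to imitate the proof of Theorem \ref{theorequivsimpleepi}, replacing the ordered set-theoretical trees $R_x$ constructed there by connected graphs. First, apply Theorem \ref{theorequivsimpleepi} to obtain an $\Linf$-elementary class $\C$ with $\cong_\C \sim_B E$ (in fact $\simeq_{cB}$ if $E$ is an isomorphism relation) and $z_1 \preceq^{wepi} z_2$ for every $z_1, z_2 \in \C$.

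Next, for each $R_x \in \C$ I would construct a connected graph $G_x$ on a countable vertex set by taking the combinatorial tree (parent-child edges) underlying $\hat{R}_x$ and attaching distinguishing rigid gadgets to encode both the distinguished root $\emptyset$ and the equivalence relation $E_x$ on the nodes. Since $E_x$ is essentially independent of $x$ (its definition only involves the sequence structure in $\seqo$, with a single class absorbing all terminal successors), the gadgets can be chosen in a uniform way that depends only on the canonical enumeration of the $E_x$-classes. The map $x \mapsto G_x$ is Borel, and standard logic-action arguments as in the previous proofs show that the saturation of its range under isomorphism is an $\hat{\L}_{\omega_1 \omega}$-elementary class $\hat{\C}$ of connected graphs.

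The verification that $x \cong y \iff G_x \cong G_y$ (whence $E \sim_B \cong_{\hat{\C}}$) and that $G_y \preceq^{wepi} G_x$ for every $x, y$ would proceed by lifting the corresponding constructions from Theorem \ref{theorequivsimpleepi}: an isomorphism $x \to y$ (respectively, a weak-epimorphism) is first lifted to one between $R_x$ and $R_y$ (respectively, $R_y$ and $R_x$) as in that theorem, and then extended to the gadget vertices via the canonical identification (respectively, canonical surjective maps) between corresponding gadgets. Conversely, rigidity of the gadgets forces graph isomorphisms to respect the partition into $E_x$-classes and to preserve the root, hence to yield isomorphisms of the underlying ordered set-theoretical trees.

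The main obstacle lies in the simultaneous design of the gadgets, which must be rigid enough to enforce the isomorphism correspondence yet admit canonical surjective self-maps allowing the extension of weak-epimorphisms. A natural choice is to use identical gadgets (say, a fixed rigid rooted graph) attached via distinguishable patterns that encode the equivalence-class label; identical gadgets admit the identity as their canonical surjection, while the class-label encoding can be made to survive under weak-epimorphisms by exploiting the fact that the weak-epimorphisms from Theorem \ref{theorequivsimpleepi} already preserve the equivalence relation, as one verifies directly from the coordinate-wise lift $\hat{g}$ used in that construction.
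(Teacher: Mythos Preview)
Your approach differs substantially from the paper's, which is much shorter. Rather than designing a bespoke graph encoding with gadgets, the paper simply applies the standard bi-interpretation of $\L$-structures into connected graphs from \cite[Theorem 5.5.1]{hodges} to the class $\C$ produced by Theorem \ref{theorequivsimpleepi}. That interpretation automatically preserves isomorphism in both directions, so only the $\preceq^{wepi}$ part needs checking. The key observation is that although the Hodges interpretation does not preserve $\preceq^{wepi}$ in general, it \emph{does} lift any weak-epimorphism $h\colon y \to x$ that has the additional edge-lifting property: for every pair $(n,m)$ in $P_i$-relation in $x$ there exist $l \in h^{-1}(n)$ and $k \in h^{-1}(m)$ in $P_i$-relation in $y$. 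One then verifies directly that the explicit weak-epimorphisms built in the proof of Theorem \ref{theorequivsimpleepi} have this property for both $P_0$ (the tree relation) and $P_1$ (the equivalence relation $E_x$), and the corollary follows in one line.

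Your gadget route leaves the central difficulty unresolved, and the suggested resolution rests on a misreading of what $\hat g$ does. You write that ``the weak-epimorphisms from Theorem \ref{theorequivsimpleepi} already preserve the equivalence relation'', apparently meaning that class labels survive; but forward preservation of $E_x$ only says that each $E_y$-class is mapped \emph{into} some $E_x$-class, and in fact $\hat g$ sends the class labelled $(a,b)$ to the class labelled $(g(a),g(b))$, which is typically different. Hence if your ``distinguishable attachment patterns'' are rigid enough that a graph isomorphism must respect them (needed for the $\cong$ direction), a weak-epimorphism sending a node with pattern $c'$ to one with pattern $c \neq c'$ cannot simply use the identity on the identical gadgets: it must also surject the attachment pattern $c'$ onto pattern $c$, and nothing in your outline guarantees this is possible for the arbitrary pairs $(c',c)$ that arise. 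The paper sidesteps this tension entirely by isolating the edge-lifting property, which is precisely the compatibility condition the \emph{already-available} Hodges encoding needs; you would do better to invoke that encoding and check the property than to build a new one from scratch.
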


\begin{proof}
Assume $\L = \{ P_0, P_1\}$, with $P_0,P_1$ binary relation symbols.
Then it is enough to (bi-)interpret the class $\C$ given by the proof of Theorem \ref{theorequivsimple} in a class $\hat{\C}$ of connected graphs as explained in \cite[Theorem  5.5.1]{hodges}, and check that 
even if in general such interpretation does not preserve $\preceq^{wepi}$, it is still true that if 
there is a weak-epimorphism $h$ from  $y \in \C$ onto $x \in \C$ \emph{such that
  for every 
$n,m$ in $P_i$-relation in $x$ ($i=0,1$) there are $l \in h^{-1}(n),k \in h^{-1}(m)$ which are in $P_i$-relation in $y$} (which is the case
for the elements of the class $\C$ constructed above) then there is a weak-epimorphism from
the interpretation of $y$ onto the interpretation of $x$.
\end{proof}

To prove the statement analogous to Theorems \ref{theorRcompatible}
and \ref{theor'}, we need to replace Theorem
\ref{theorfrimot} with the following result obtained in \cite[Section
5.1]{mottoroscamerlo}. 

\begin{theorem}[\cite{mottoroscamerlo}]\label{theormottoroscamerlo}
For every analytic quasi-order $R$ there is an $\hat{\L}_{\omega_1
  \omega}$-elementary class $\hat{\C}$ consisting of connected graphs such that
${\preceq^{wepi}_{\hat{\C}}} \sim_B R$ and ${\cong_{\hat{\C}}} \simeq_{cB} \id(\RR)$ (and moreover ${\approx^{wepi}_{\hat{\C}}} \simeq_{cB} E_R$, where $\approx^{wepi}_{\hat{\C}}$ is the equivalence relation associated to $\preceq^{wepi}_{\hat{\C}}$). 
\end{theorem}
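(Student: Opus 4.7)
The plan is to combine the skeleton used in Theorem \ref{theorfrimot} with the weak-epimorphism gadgetry developed in Theorem \ref{theorequivsimpleepi} and Corollary \ref{corequivsimpleepi}. First, I would apply Theorem \ref{theorfrimot} to $R$ to obtain an $\Linf$-elementary class $\C'$ of ordered combinatorial trees such that ${\sqsubseteq_{\C'}} \sim_B R$ and ${\cong_{\C'}} \simeq_{cB} \id(\RR)$ (with $E_R \simeq_{cB} {\equiv_{\C'}}$). The goal is then to convert each $x \in \C'$, which carries an auxiliary strict well-founded order relation besides its graph edges, into a pure connected graph $G_x$ so that the weak-epimorphism quasi-order on the resulting class reflects exactly the embeddability quasi-order on $\C'$.

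The construction of $G_x$ I would propose is a graph analogue of the set-theoretical trees $R_x$ built in Theorem \ref{theorequivsimpleepi}: over a canonical backbone indexed by $\seqo$, attach terminal successors to each node $s$ whose presence or multiplicity encodes (a) the edges of the combinatorial tree of $x$ and (b) the ordered pairs of its order relation; and use the ``single vs.\ double terminal successor'' trick of Theorem \ref{theorequivsimpleepi} to obtain, at each level, enough symmetry that a weak-epimorphism $G_y \twoheadrightarrow G_x$ can be built from any embedding $x \sqsubseteq y$ in $\C'$ by lifting the embedding coordinatewise and completing it on the terminal successors. Then I would pass from the resulting ordered-graph class to an $\hat{\L}_{\omega_1\omega}$-class $\hat{\C}$ of \emph{connected} graphs by the bi-interpretation of \cite[Theorem 5.5.1]{hodges} used in Corollary \ref{corequivsimpleepi}, checking (as was done there) that the bi-interpretation preserves the existence of weak-epimorphisms in both directions on the relevant classes.

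Once $G_x$ is constructed, I would verify the standard package: $x \mapsto G_x$ is Borel and injective up to isomorphism; the saturation of $\{G_x : x \in \C'\}$ under the logic action $j_{\hat{\L}}$ is Borel, via the argument used in the proofs of Theorems \ref{theorequivsimple} and \ref{theorequivsimpleepi} (picking a Borel $B \subseteq S_\infty$ of ``canonical'' permutations), hence defines an $\hat{\L}_{\omega_1\omega}$-elementary class $\hat{\C}$; the map induces a Borel bireduction between ${\cong_{\C'}}$ and ${\cong_{\hat{\C}}}$, and between ${\sqsubseteq_{\C'}}$ and ${\preceq^{wepi}_{\hat{\C}}}$. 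Together with ${\cong_{\C'}} \simeq_{cB} \id(\RR)$ and ${\sqsubseteq_{\C'}} \sim_B R$, this yields the claimed ${\cong_{\hat{\C}}} \simeq_{cB} \id(\RR)$ and ${\preceq^{wepi}_{\hat{\C}}} \sim_B R$; and ${\approx^{wepi}_{\hat{\C}}} \simeq_{cB} E_R$ falls out of the same explicit witnesses together with $E_R \simeq_{cB} {\equiv_{\C'}}$.

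The main obstacle is the reverse direction of the key equivalence, namely ruling out spurious weak-epimorphisms $G_y \twoheadrightarrow G_x$ which do not arise from embeddings $x \sqsubseteq y$. Since weak-epimorphisms are surjective but not injective, many vertices of $G_y$ may collapse onto a single vertex of $G_x$, and one must design the ``label'' gadgets attached to relevant nodes (terminal successors of given multiplicity, anchored by asymmetric finite configurations that admit no nontrivial weak-homomorphism) so that any weak-epimorphism is \emph{forced} to respect the backbone $\seqo$, the equivalence $E_x$, and the two binary relations coming from the combinatorial tree and the order relation of $x$; only then can a back-and-forth argument recover an embedding of $x$ into $y$ from a weak-epimorphism of $G_y$ onto $G_x$. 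This rigidity analysis is the technical heart of the proof and is the step I expect to consume most of the combinatorial work.
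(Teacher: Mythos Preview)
The paper does not prove this theorem: it is quoted as a result from \cite[Section 5.1]{mottoroscamerlo}, so there is no proof here to compare your proposal against line by line. What the paper does reveal about that construction appears in the proof of Theorem \ref{theorEcompatibleepi}, where it is used that the vertices of the graphs in $\hat{\C}$ are characterized by belonging to nontrivial cliques (of size $\geq 3$). This indicates that the original argument in \cite{mottoroscamerlo} is a \emph{direct} clique-based encoding of $R$ into weak-epimorphism on graphs, not a two-step reduction through Theorem \ref{theorfrimot} and a tree-to-graph translation as you propose.

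Your route is conceptually different and, as you correctly identify, its crux is the reverse implication. Be aware that the machinery of Theorem \ref{theorequivsimpleepi} you want to reuse was engineered for a very special situation: the input class $\hat{\C}$ there already satisfies $x_1 \sqsubseteq x_2$ for \emph{all} pairs, so the construction only needs to manufacture weak-epimorphisms everywhere and to preserve isomorphism; it never has to \emph{prevent} a weak-epimorphism. Recovering an embedding $x \sqsubseteq y$ from an arbitrary weak-epimorphism $G_y \twoheadrightarrow G_x$ is a genuinely new rigidity requirement, and the single-vs-double terminal successor trick alone is far too soft for this (a weak-epimorphism can collapse many nodes onto one, can send non-edges to edges, and need not respect the backbone stratification). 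You would need gadgets that are rigid under surjective weak-homomorphism --- this is precisely what cliques of prescribed sizes provide, since a weak-epimorphism cannot shrink a clique --- and at that point your construction would converge toward the one actually used in \cite{mottoroscamerlo}. So your plan is not wrong, but the ``technical heart'' you flag is essentially the entire content of the cited theorem, and the tools borrowed from Theorem \ref{theorequivsimpleepi} do not supply it.
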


We can now repeat the
proofs of Theorems \ref{theorRcompatible} and \ref{theor'} and show
the following.

\begin{theorem}\label{theorEcompatibleepi}
Let $E$ be a quasi-isomorphism relation and 
$R$ be 
an arbitrary analytic quasi-order such that either $\id(\RR) \leq_B E$
or $E_R \leq_B \id(\omega), E$. Then
there is an $\hat{\L}_{\omega_1 \omega}$-elementary class $\hat{\C}$ such that
$E \sim_B  {\cong_{\hat{\C}}}$ and $R \sim_B
{\preceq^{wepi}_{\hat{\C}}}$ (in fact, if $E$ itself is an isomorphism
 relation then $E \simeq_{cB} {\cong_{\hat{\C}}}$ and $E_R \simeq_{cB} {\approx^{wepi}_{\hat{\C}}}$).
\end{theorem}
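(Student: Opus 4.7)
The plan is to repeat the proofs of Theorems \ref{theorRcompatible} and \ref{theor'} with the obvious substitutions, splitting as before into the subcases $\id(\RR) \leq_B E$ and $E_R \leq_B \id(\omega), E$. In place of Theorem \ref{theorfrimot} I would apply Theorem \ref{theormottoroscamerlo} to $R$ to obtain an $\hat{\L}_{\omega_1 \omega}$-elementary class $\hat{\C}'$ of connected graphs with ${\preceq^{wepi}_{\hat{\C}'}} \sim_B R$ and ${\cong_{\hat{\C}'}} \simeq_{cB} \id(\RR)$, and in place of Theorem \ref{theorequivsimple} I would apply Corollary \ref{corequivsimpleepi} to $E$ to obtain an $\hat{\L}_{\omega_1 \omega}$-elementary class $\hat{\C}''$ of connected graphs with ${\cong_{\hat{\C}''}} \sim_B E$ and $z_1 \preceq^{wepi} z_2$ for every $z_1, z_2 \in \hat{\C}''$. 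When $\id(\RR) \leq_B E$, Proposition \ref{propessentiallyrefines} provides a Borel $E' \supseteq {\cong_{\hat{\C}''}}$ classwise Borel isomorphic to $\id(\RR)$, with witnesses $\fhi_1, \psi_1$ and $\fhi_0, \psi_0$ for ${\cong_{\hat{\C}'}} \simeq_{cB} \id(\RR)$; when $E_R \leq_B \id(\omega), E$ with $1 < n \leq \omega$ classes, one selects pairwise non-isomorphic $z_1, \dotsc, z_n \in \hat{\C}''$ and pairwise $E_R$-inequivalent $x_1, \dotsc, x_n \in \hat{\C}'$ as in Theorem \ref{theor'}.

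One then defines the Borel set $W \subseteq \hat{\C}' \times \hat{\C}'' \times G$ and the relations $S, F$ on $W$ verbatim as in the corresponding proof, so that $S \sim_B R$ and $F \sim_B E$. The essential new ingredient is to replace the ordered-graph join $x \oplus z$ by a purely graph-theoretic join $x \mathbin{\tilde{\oplus}} z \in \Mod(\hat{\L})$: place $x$ on the even integers, $z$ on the odd integers, and link a distinguished root of $x$ to a distinguished root of $z$ by a single edge, so that the whole graph is connected. Such roots are furnished by the constructions underlying Theorem \ref{theormottoroscamerlo} and by the tree-based construction of Corollary \ref{corequivsimpleepi}, and in both cases they are preserved by every isomorphism and every weak-epimorphism between elements of the respective classes. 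The Borel map $f$ is then defined by $f(x,z,g) = j_{\hat{\L}}(g, x \mathbin{\tilde{\oplus}} z)$.

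The remaining verifications follow the same template as in Theorems \ref{theorRcompatible} and \ref{theor'}: $f$ reduces $S$ to $\preceq^{wepi}$ and $F$ to $\cong$, $f$ is injective, and ${\rm range}(f)$ is saturated under isomorphism, so that $\hat{\C} = {\rm range}(f)$ is the desired $\hat{\L}_{\omega_1 \omega}$-elementary class. The main obstacle lies in the \emph{decomposition} direction: every weak-epimorphism from $x_2 \mathbin{\tilde{\oplus}} z_2$ onto $x_1 \mathbin{\tilde{\oplus}} z_1$ must restrict to weak-epimorphisms $x_2 \to x_1$ and $z_2 \to z_1$, and similarly for isomorphisms. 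For embeddings this was ensured by the transitive relation distinguishing vertices coming from each side; for non-injective weak-epimorphisms one must instead exploit structural rigidity of the graphs in $\hat{\C}'$ and $\hat{\C}''$ (for instance distinct degree profiles or characteristic finite subgraphs inherited from the tree constructions of Theorem \ref{theorequivsimpleepi} and \cite{mottoroscamerlo}) to prevent a vertex of the $x$-side from being collapsed onto one from the $z$-side, and to guarantee that the unique connecting edge is preserved. Once this rigidity is verified, the remaining claims go through exactly as in the embedding case, completing the proof.
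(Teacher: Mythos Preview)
Your overall strategy is the same as the paper's: substitute Theorem~\ref{theormottoroscamerlo} for Theorem~\ref{theorfrimot}, Corollary~\ref{corequivsimpleepi} for Theorem~\ref{theorequivsimple}, redefine $S$ via $\preceq^{wepi}$, and rerun the argument of Theorems~\ref{theorRcompatible} and~\ref{theor'}. Where you diverge is in two places, and the second is a genuine gap.

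First, the paper keeps the \emph{disconnected} join $x \oplus z$ rather than introducing your connected $x \mathbin{\tilde{\oplus}} z$ with a bridging edge. The statement does not require $\hat{\C}$ to consist of connected graphs, and keeping the two pieces disconnected is what makes the decomposition argument work cleanly (see below). Your bridging edge is not only unnecessary but actively harmful: a weak-epimorphism need not send the bridging edge of $x_2 \mathbin{\tilde{\oplus}} z_2$ to the bridging edge of $x_1 \mathbin{\tilde{\oplus}} z_1$, and there is no general reason a weak-epimorphism between elements of $\hat{\C}'$ or $\hat{\C}''$ should fix the ``root'', so the property you appeal to is simply not available.

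Second, and more importantly, you correctly identify the decomposition direction as the main obstacle but leave it as ``structural rigidity \dots\ for instance distinct degree profiles or characteristic finite subgraphs'', which is not a proof. The paper supplies the missing mechanism concretely: by the construction in \cite{mottoroscamerlo} (combined with the coding of \cite[Theorem 5.5.1]{hodges}), the vertices of $f(w_i)$ coming from $x_i$ are exactly those lying in a nontrivial clique of size $\geq 3$, whereas no vertex coming from $z_i$ does. Since any weak-epimorphism sends a clique onto a clique of at least the same size, $x_2$-vertices must land in $x_1$-vertices. Then, because $x \oplus z$ has exactly two connected components, surjectivity of $h$ together with the fact that $h$ sends connected subgraphs to connected subgraphs forces the $z_2$-component to map onto the $z_1$-component. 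This clique criterion also gives the intrinsic recognisability needed for injectivity of $f$ and saturation of its range. None of this is available with your connected join, and without naming the clique property your argument does not close.
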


\begin{proof}
Carry out the proofs of Theorems \ref{theorRcompatible} and \ref{theor'} by replacing the original $\C'$ and $\C''$ with the classes $\hat{\C}'$ and $\hat{\C}''$ obtained by applying, respectively, Theorem \ref{theormottoroscamerlo} to $R$ and Corollary \ref{corequivsimpleepi} to $E$, and by redefining the quasi-order $S$ on $W$ using $\preceq^{wepi}$ instead of $\sqsubseteq$. By inspecting those proofs, it is enough to show that 

\begin{enumerate}[(a)]
 \item  given $w_1 = (x_1,z_1,g_1), w_2 = (x_2,z_2,g_2) \in W$, any weak-epimorphism $h$ of $f(w_2) = j_{\hat{\L}}(g_2, x_2 \oplus z_2)$ onto $f(w_1) = j_{\hat{\L}}(g_1, x_1 \oplus z_1)$ (hence, in particular, any isomorphism between $f(w_2)$ and $f(w_1)$) must send elements coming from $x_2$ (resp.\ $z_2$) into elements coming from $x_1$ (resp.\ $z_1$), and 

\item the vertices of $f(w_i)$ coming from $x_i$ (and hence also the vertices of $f(w_i)$ coming from $z_i$, where $i=1,2$) can be recognized in an intrinsic way, i.e.\ using a property of the structure $f(w_i)$ which does not use any knowledge about $w_i$. 
\end{enumerate}

By the proofs of Theorem \ref{theormottoroscamerlo} and \cite[Theorem 5.5.1]{hodges}, the vertices of $f(w_i)$ coming from $x_i$ ($i=1,2$) are uniquely determined by the property of belonging to a nontrivial (i.e.\ of size $\geq 3$) clique (this gives part (b) above). But it is easy to check that $h$ must send each nontrivial clique into a clique of greater or equal size (so that vertices of $f(w_2)$ coming from $x_2$ are sent by $h$ into vertices of $f(w_1)$ coming from $x_1$), and that, consequently, vertices of $f(w_2)$ coming from $z_2$ are sent by $h$ into vertices of $f(w_1)$ coming from $z_1$ because $h$ is surjective and must send connected subgraphs of $f(w_2)$ to connected subgraphs of $f(w_1)$: this gives part (a) and concludes our proof.
\end{proof}

\end{document}